 \newtheorem{thm}{Theorem}[section]
 \newtheorem{lem}[thm]{Lemma}
 \newtheorem{prop}[thm]{Proposition}
 \theoremstyle{definition}
 \newtheorem*{ack}{Acknowledgments}
 \numberwithin{equation}{section}
\def\ra{\rightarrow}
\def\pt{\partial}
\def\R{\mathbb{R}}
\begin{document}
\title[Minkowski-type inequality in Schwarzschild space]{On the Minkowski-type inequality for outward minimizing hypersurfaces in Schwarzschild space}
\author{Yong Wei}
\address{Mathematical Sciences Institute,
Australian National University,
ACT 2601 Australia}
\email{\href{mailto:yong.wei@anu.edu.au}{yong.wei@anu.edu.au}}

\subjclass[2010]{53C44, 53C42}
\keywords {Inverse mean curvature flow, Minkowski inequality, Outward minimizing, Schwarzschild space}


\begin{abstract}
Using the weak solution of Inverse mean curvature flow, we prove the sharp Minkowski-type inequality for outward minimizing hypersurfaces in Schwarzschild space.
\end{abstract}

\maketitle

\section{Introduction}\label{sec:1}

The Schwarzschild space is an $n$-dimensinal ($n\geq 3$) manifold $(M^n,g)$ with boundary $\pt M$, which is conformal to $\mathbb{R}^n\setminus D_{r_0}$, with the metric
\begin{equation}\label{s1:schw-1}
  g_{ij}(x)=~\left(1+\frac m2|x|^{2-n}\right)^{\frac 4{n-2}}\delta_{ij},\qquad x\in  \mathbb{R}^n\setminus D_{r_0},
\end{equation}
where $m>0$ is a constant, $r_0=(\frac m2)^{\frac 1{n-2}}$. The coordinate sphere $S_{r_0}=\partial D_{r_0}$ is the horizon of the Schwarzschild space and is outward minimizing. Equivalently, $M=[s_0,\infty)\times\mathbb{S}^{n-1}$ and
\begin{equation}\label{schwarz}
    g=\frac 1{1-2ms^{2-n}}ds^2+s^2g_{\mathbb{S}^{n-1}},
\end{equation}
where $s_0$ is the unique positive solution of $1-2ms_0^{2-n}=0$ and $g_{\mathbb{S}^{n-1}}$ is the canonical round metric on the unit sphere $\mathbb{S}^{n-1}$.  In this paper, we will denote
\begin{equation}\label{s1:f-def}
    f(x)~=~\sqrt{1-2ms^{2-n}},\qquad \mathrm{for~ any}~~ x=(s,\theta)\in M^n,
\end{equation}
which is called the potential function of $(M^n,g)$. As is well known, the Schwarzschild space is asymptotically flat, and is static in the sense that the potential function $f$ satisfies
\begin{equation}\label{s1:static-eqn}
  \nabla^2f~=~f\mathrm{Ric},\quad \Delta f~=~0,
\end{equation}
where $\mathrm{Ric}$ is the Ricci tensor of $(M^n,g)$, $\nabla,\nabla^2$ and $\Delta$ are gradient, Hessian and Laplacian operator with respect to the metric $g$ on $M^n$. It can be easily checked that the spacetime metric $\hat{g}=-f^2dt^2+g$ on $M^n\times \mathbb{R}$ solves the vacuum Einstein equation. In particular, \eqref{s1:static-eqn} implies that $(M^n,g)$ has constant zero scalar curvature $R$.

Let $\Omega$ be a bounded domain with smooth boundary in $(M^n,g)$. Then there are two cases:
\begin{itemize}
  \item[(i)] $\Omega$ has only one boundary component $\Sigma=\partial\Omega$ and we say that $\Sigma$ is null-homologous;
  \item[(ii)] $\Omega$ has two boundary components $\partial\Omega=\Sigma\cup\partial M$ and we say that $\Sigma$ is homologous to the horizon $\pt M$ of the Schwarzchild space.
\end{itemize}
The boundary hypersurface $\Sigma$ is said to be outward minimizing if whenever $E$ is a domain containing $\Omega$ then $|\partial E|\geq |\partial\Omega|$. From the first variational formula for area functional, an outward minimizing hypersurface must be a  mean-convex hypersurface.

The main result of this paper is the following Minkowski-type inequality for outward minimizing hypersurface in Schwarzschild space.
\begin{thm}\label{main-thm}
Let $\Omega$ be a bounded domain with smooth and outward minimizing boundary in the Schwarzschild space $(M^n,g)$. Assume either
\begin{itemize}
  \item[(1)] $n<8$, or
  \item[(2)] $n\geq 8$ and $\Sigma=\pt\Omega\setminus\pt M$ is homologous to the horizon.
\end{itemize}
Then
\begin{equation}\label{main-inequ}
    \frac 1{(n-1)\omega_{n-1}}\int_{\Sigma}fH d\mu\geq~\left(\frac{|\Sigma|}{\omega_{n-1}}\right)^{\frac{n-2}{n-1}}-2m,
\end{equation}
where $\omega_{n-1}$ is the area of the unit sphere $\mathbb{S}^{n-1}\subset\R^n$, and $|\Sigma|$ is the area of $\Sigma$ with respect to the induced metric from $(M^n,g)$. Moreover, the equality holds in \eqref{main-inequ} if and only if $\Sigma$ is a slice $\{s\}\times \mathbb{S}^{n-1}$.
\end{thm}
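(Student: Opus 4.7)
I would use the weak inverse mean curvature flow (IMCF) of Huisken--Ilmanen starting from $\Sigma$, with $\nu$ denoting the outward unit normal. The outward-minimizing hypothesis is precisely what ensures no initial jump occurs, yielding a weak flow $\{\Sigma_t\}_{t\geq 0}$ with $\Sigma_0=\Sigma$. The target is the monotone quantity
\[
Q(t) := |\Sigma_t|^{-\frac{n-2}{n-1}} \left( \int_{\Sigma_t} f H\, d\mu_t + 2m(n-1)\omega_{n-1} \right),
\]
which takes the constant value $(n-1)\omega_{n-1}^{1/(n-1)}$ on every slice $\{s\}\times\mathbb{S}^{n-1}$, so that \eqref{main-inequ} is equivalent to $Q(0) \geq (n-1)\omega_{n-1}^{1/(n-1)}$.

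\textbf{Monotonicity.} Along smooth IMCF, combining $\tfrac{d}{dt}d\mu_t=d\mu_t$ with the standard evolution of $H$, integration by parts of $\int f\Delta_\Sigma(1/H)\,d\mu$, and the static identities \eqref{s1:static-eqn} (which give $\Delta_\Sigma f = -f\,\mathrm{Ric}(\nu,\nu)-H\langle\nabla f,\nu\rangle$ on any hypersurface via the Gauss equation), yield
\[
\frac{d}{dt}\int_{\Sigma_t} fH\,d\mu = \int_{\Sigma_t} \left(2\langle\nabla f,\nu\rangle + \frac{f(H^2-|A|^2)}{H}\right)d\mu,
\]
where $|A|^2$ is the squared norm of the second fundamental form and all ambient Ricci contributions cancel. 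Substituting into $Q'$, the Cauchy--Schwarz inequality $|A|^2 \geq H^2/(n-1)$ controls one part of the integrand, and it remains to handle the flux combination $2\int_{\Sigma_t}\langle\nabla f,\nu\rangle\,d\mu - 2m(n-2)\omega_{n-1}$. By the divergence theorem on the flow region and $\Delta f=0$, the flux of $\nabla f$ through $\Sigma_t$ equals $m(n-2)\omega_{n-1}$ in case (2)---the horizon term is computed directly from $\nabla f$ at $s=s_0$ and equals exactly $m(n-2)\omega_{n-1}$---and $0$ in case (1); in both cases the combination is $\leq 0$ and so $Q'\leq 0$. I would then promote this monotonicity to the weak flow via Huisken--Ilmanen's machinery: the assumption $n<8$ guarantees $C^{1,\alpha}$ regularity of weak level sets (and of their outer minimizing hulls), while in case (2) the horizon-homology assumption confines the flow to the shell between $\partial M$ and $\Sigma_t$, keeping the flux identity valid throughout.

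\textbf{Asymptotic limit, rigidity, main obstacle.} In the asymptotically flat end, $\Sigma_t$ becomes round with $f\to 1$, so an explicit computation on coordinate spheres gives $\lim_{t\to\infty} Q(t) = (n-1)\omega_{n-1}^{1/(n-1)}$; combined with monotonicity this proves \eqref{main-inequ}. Rigidity then follows since equality forces $Q'\equiv 0$, hence $|A|^2\equiv H^2/(n-1)$ on every $\Sigma_t$, making each level set totally umbilic; the classification of umbilic hypersurfaces in the warped-product metric \eqref{schwarz} identifies them with slices. The main obstacle I anticipate is justifying the monotonicity rigorously across the non-smooth times and jumps of the weak flow, especially in case (1) where $\Sigma_t$ may eventually engulf the horizon and shift the flux of $\nabla f$ from $0$ to $m(n-2)\omega_{n-1}$ mid-flow; by analogy with Huisken--Ilmanen's Hawking-mass argument, this should be handled by showing that jumps to outward-minimizing hulls cannot increase $Q$, since on such a jumped-to surface the divergence contribution and the area behave compatibly with $Q'\leq 0$.
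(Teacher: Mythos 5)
Your overall route --- the monotone quantity $Q(t)$, the evolution/divergence-theorem computation with the static identity, the blow-down limit $Q(t)\to (n-1)\omega_{n-1}^{1/(n-1)}$, and handling the null-homologous case by a jump to a minimizing hull --- is the same as the paper's. But there is a genuine gap in your rigidity step. You assert that ``the classification of umbilic hypersurfaces in the warped-product metric \eqref{schwarz} identifies them with slices,'' and this is false: since $g=e^{2\psi}\delta$ is conformally flat, a hypersurface is totally umbilic in $g$ if and only if it is totally umbilic in $\delta$, i.e., a Euclidean round sphere, and off-center spheres in $\mathbb{R}^n\setminus D_{r_0}$ are therefore umbilic in $g$ without being slices $\{s\}\times\mathbb{S}^{n-1}$. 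Knowing that a single level set is umbilic is not enough. The paper closes this by exploiting umbilicity of $\Sigma_t$ for \emph{all} $t$ in a short smooth interval $[0,\delta)$: rewriting the IMCF as a flow in the flat background, differentiating the trace-free second fundamental form $\mathring{\bar h}_i^j$ in $t$, and showing that $\partial_t\mathring{\bar h}_i^j\equiv 0$ forces $\langle X,e_i\rangle^2=|X^\top|^2/(n-1)$ for all tangent directions, hence $X$ is parallel to the normal and each $\Sigma_t$ is centered at the origin. Without an argument of this type your rigidity conclusion does not follow.

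A secondary imprecision concerns the jump. You correctly guess that a jump cannot increase $Q$, but the mechanism is not the ``divergence contribution.'' Rather it is two separate inequalities for the strictly minimizing hull $F\supset\Omega_{t_1}\cup W$: first $|\partial F|\geq |\Sigma_{t_1}|$ (because $\Sigma_{t_1}$ is outward minimizing and $F\supset\Omega_{t_1}$), and second $\int_{\partial F}fH\leq\int_{\Sigma_{t_1}}fH$ (because $H=0$ on $\partial F\setminus\Sigma_{t_1}$ and $H\geq 0$ on the retained part, after a mollification argument). The $n<8$ restriction enters exactly here, to get $\partial F$ of class $C^{1,1}$ with no singular set so that this integral inequality makes sense and the flow can restart from $\partial F$. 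You should also record that \emph{before} the jump the flux $\int_{\Sigma_t}\nabla f\cdot\nu$ is $0$ rather than $m(n-2)\omega_{n-1}$, so $Q$ is \emph{strictly} decreasing on $[0,t_1]$; this strict decrease is precisely what forces $\Sigma$ to be homologous to the horizon in the equality case, a step your rigidity sketch also omits.
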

\noindent For mean convex and star-shaped hypersurface in Schwarzschild space, the inequality \eqref{main-inequ} was obtained by Brendle-Hung-Wang \cite{BHW} as the limit case of their inequality in Anti-de Sitter-Schwarzschild space. Note that a star-shaped hypersurface must be homologous to the horizon of the Schwarzchild space. Our result does not require the hypersurface to be star-shaped. The inequality \eqref{main-inequ} is a natural generalization of the classical Minkowski inequality for convex hypersurface $\Sigma$ in $\mathbb{R}^n$, which states that
\begin{equation}\label{mink-R}
  \int_{\Sigma}H d\mu\geq (n-1)\omega_{n-1}^{\frac 1{n-1}}|\Sigma|^{\frac{n-2}{n-1}}.
\end{equation}
The inequality \eqref{mink-R} was originally proved for convex hypersurfaces using the theory of convex geometry and was proved recently by Guan-Li \cite{GL} for mean convex and star-shaped hypersurfaces using the smooth solution of inverse mean curvature flow (IMCF). Huisken recently applied the weak solution of IMCF in \cite{HI01} to show that the inequality \eqref{mink-R} also holds for outward minimizing hypersurfaces in $\mathbb{R}^n$ (see \cite{Hui09}). The proof of this result was also given by Freire-Schwartz \cite{Fre-Sch-2014}. By letting $m\ra 0$, the Schwarzschild metric reduces to the Euclidean metric $g=ds^2+s^2g_{\mathbb{S}^{n-1}}$ and the potential function $f$ approaches to $1$. Thus Theorem \ref{main-thm} generalizes the result of Huisken and Freire-Schwartz to that for outward-minimizing hypersurfaces in Schwarzchild space.

To prove Theorem \ref{main-thm}, we use the standard procedure in proving geometric inequalities using the hypersurface curvature flows (see e.g.,\cite{BHW,Fre-Sch-2014,GL,HI01}). We will employ the weak solution of IMCF, which was developed by Huisken-Ilmanen in \cite{HI01} and was applied to prove the Riemannian Penrose inequality for asymptotically flat $3$-manifold with nonnegative scalar curvature. The weak solution of IMCF has also been applied in many other problems, see for example \cite{Bray-M,BN,Fre-Sch-2014,Lee-Neves}.  In our case, if $\Sigma$ is homologous to the horizon, then starting from $\Sigma$ there exists the weak solution of IMCF which is given by the level sets $\Sigma_t=\pt\Omega_t=\pt\{u<t\}$ of a proper locally Lipschitz function $u:\Omega^c\to \mathbb{R}^+$, where $\Omega^c$ denotes the compliments of $\Omega$ in $M$. Each $\Sigma_t$ is $C^{1,\alpha}$ away from a closed singular set $Z$ of Hausdorff dimension at most $n-8$ and $\Sigma_t$ will become $C^{1,\alpha}$ close to a large coordinate sphere as $t\to\infty$. On each $\Sigma_t$ we define the following quantity
\begin{equation}\label{s1:Qt-def}
    Q(t)=|\Sigma_t|^{-\frac{n-2}{n-1}}\left(\int_{\Sigma_t}fHd\mu_t+2(n-1)m\omega_{n-1}\right),
\end{equation}
where $|\Sigma_t|$ is the area of $\Sigma_t$. $Q(t)$ is well-defined because each $\Sigma_t$ is $C^{1,\alpha}$ with small singular set, the weak mean curvature of $\Sigma_t$ can be defined as a locally $L^1$ function using the first variation formula for area.  We will prove that $Q(t)$ is monotone non-increasing along the weak solution of IMCF. If $\Sigma=\partial\Omega$ is null-homologous, we first fill-in the region $W$ bounded by the horizon $\pt M$ to obtain a new manifold $\tilde{M}$ and then run the weak IMCF in $\tilde{M}$ with initial condition $\Sigma$.  When the flow $\Sigma_t=\pt\Omega_t$ nearly touches the horizon $\partial M$, we jump to the strictly minimizing hull $F$ of the union $\Omega_t\cup W$. Assume that $n<8$, we show that
\begin{equation*}
  |\partial F|\geq~|\Sigma_t|,\qquad \int_{\Sigma_t}fHd\mu_t\geq~\int_{\partial F}fHd\mu,
\end{equation*}
which implies that $Q(t)$ does not increase during the jump. Then we restart the flow from $\partial F$.  The restriction $n<8$ on the dimension in this case is due to that we need to jump to the strictly minimizing hull $F$ before we restart the flow, and $\pt F$ is only known to be smooth, more precisely $C^{1,1}$ for $n<8$. In summary, under the assumption of Theorem \ref{main-thm} we can prove that the quantity $Q(t)$ is monotone decreasing in time along the weak solution of IMCF.

Once we have the monotonicity of $Q(t)$, the next step is to estimate the limit when $t\to\infty$. For this, we will use the property that the weak solution becomes $C^{1,\alpha}$ close to a large coordinate sphere as $t\to\infty$ as shown in \cite[\S 7]{HI01}. The estimate that we will prove is the following:
\begin{align}\label{s1:Qt-lim}
    \lim_{t\ra\infty}Q(t)=&~(n-1)\omega_{n-1}^{\frac 1{n-1}}.
\end{align}
Then the main inequality \eqref{main-inequ} follows immediately from the monotonicity of $Q(t)$ and the estimate \eqref{s1:Qt-lim} on its limit. To complete the proof of Theorem \ref{main-thm}, we need to show the rigidity of the inequality. If the equality holds in \eqref{main-inequ}, from the proof of the monotonicity in \S \ref{sec:thm1} we know that $\Sigma$ must be homologous to the horizon and $\Sigma_t$ is umbilic a.e. for almost all time. This can be used to show that $\Sigma$ is an Euclidean sphere if it is considered as a hypersurface in $\mathbb{R}^n\setminus D_{r_0}$ with respect to the Euclidean metric. The last step is to show that $\Sigma$ is a sphere centered at the origin. We will use the property that a hypersurface to be umbilic is invariant under the conformal change of the ambient metric and the totally umbilic of $\Sigma_t$ for almost all time.

The rest of this paper is organized as follows. In \S \ref{sec:2}, we review some properties of the weak solution of IMCF. For more detail, we refer the readers to Huisken-Ilmanen's original paper \cite{HI01}. In \S \ref{sec:smooth}, we show how to derive the monotonicity of $Q(t)$ in the case that the flow is smooth. In \S \ref{sec:thm1}, we use the approximation argument to show the monotonicity of $Q(t)$ under the weak IMCF. In the last section, we estimate the limit of $Q(t)$ as $t\to\infty$ and complete the proof of Theorem \ref{main-thm}.

\begin{ack}
The author would like to thank Ben Andrews, Gerhard Huisken, Pei-Ken Hung and Hojoo Lee for their suggestions and discussions, and Haizhong Li, Mu-Tao Wang for their interests and comments.  The author would also like to thank the referee for helpful comments.  The author was supported by Ben Andrews throughout his Australian Laureate Fellowship FL150100126 of the Australian Research Council.
\end{ack}

\section{Weak solution of IMCF}\label{sec:2}
Let $(M^n,g)$ be the Schwarzschild space. The classical solution of IMCF is a smooth family $x: \Sigma\times [0,T)\to M$ of hypersurfaces $\Sigma_t=x(\Sigma,t)$ satisfying
\begin{equation}\label{s2:IMCF1}
  \frac{\pt x}{\pt t}=~\frac 1H\nu,\qquad x\in\Sigma_t,
\end{equation}
where $H, \nu$ are the mean curvature and outward unit normal of $\Sigma_t$, respectively. If the initial hypersurface is star-shaped and strictly mean convex, the smooth solution of \eqref{s2:IMCF1} exists for all time $t\in [0,\infty)$, and the flow hypersurfaces $\Sigma_t$ converge to large coordinate sphere in exponentially fast, see \cite{Li-Wei-Schwar,sch2017}. In general, without some special assumption on the initial hypersurface, the smoothness may not be preserved, the mean curvature may tend to zero at some points and the singularities develop. See for example the thin torus in Euclidean space (\cite[\S 1]{HI01}), i.e. the boundary of an $\epsilon$-neighborhood of a large round circle. The mean curvature is positive on this thin torus, so the smooth solution of \eqref{s2:IMCF1} exists for at least a short time. By deriving the upper bound of the mean curvature along the flow, we can see that the torus will steadily fatten up and the mean curvature will become negative in the donut hole in finite time.

In \cite{HI01}, Huisken-Ilmanen used the level-set approach and developed the weak solution of IMCF to overcome this problem. The evolving hypersurfaces are given by the level-sets of a scalar function $u:M^n\to \mathbb{R}$ via
\begin{equation*}
  \Sigma_t=\pt\{x\in M: u(x)<t\}.
\end{equation*}
Whenever $u$ is smooth with non vanishing gradient $\nabla u\neq 0$, the flow \eqref{s2:IMCF1} is equivalent to the following degenerate elliptic equation
\begin{equation}\label{s2:IMCF2}
  \mathrm{div}_M\left(\frac{\nabla u}{|\nabla u|}\right)=~|\nabla u|.
\end{equation}
Using the minimization principle and elliptic regularization, Huisken-Ilmanen proved the existence, uniquess, compactness and regularity properties of the weak solution of \eqref{s2:IMCF2}. The existence result only require mild growth assumption on the underlying manifold, and applies in particular to the Schwarzchild space here. We summaries their results in the following.

\begin{thm}[\cite{HI01}]\label{s2:thm}\label{s2:thm2-1}
Let $\Omega$ be a bounded domain with smooth boundary in the Schwarzschild space $(M^n,g)$ with $n<8$ and $\Sigma=\pt\Omega\setminus\pt M$. In case that $\Sigma$ is null-homologous, we fill-in the region $W$ bounded by the horizon. Then there exists a proper, locally Lipschitz function $u\geq 0$ on $\Omega^c=M\setminus \Omega$, called the weak solution of IMCF with initial condition $\Sigma$, satisfying
\begin{itemize}
  \item[(a)] $u|_{\Sigma}=0$, $\lim_{x\to\infty}u=\infty$. For $t>0$, $\Sigma_t=\partial\{u< t\}$ and $\Sigma_t'=\partial\{u>t\}$ define increasing families of $C^{1,\alpha}$ hypersurfaces.
  \item[(b)] The hypersurfaces $\Sigma_t$ (resp. $\Sigma_t'$) minimize (resp. strictly minimize) area among hypersurfaces homologous to $\Sigma_t$ in the region $\{u\geq t\}$. The hypersurface $\Sigma'=\partial \{u>0\}$ strictly minimizes area among hypersurfaces homologous to $\Sigma$ in $\Omega^c$.
  \item[(c)] For $t>0$, we have
\begin{equation}\label{s3:c}
  \Sigma_s\to\Sigma_t \quad \mathrm{as}\quad s\nearrow t,\qquad \Sigma_s\to\Sigma_t'\quad \mathrm{as}\quad s\searrow t
\end{equation}
locally in $C^{1,\beta}$ in $\Omega^c$, $\beta<\alpha$. The second convergence also holds as $s\searrow 0$.
  \item[(d)] For almost all $t>0$, the weak mean curvature of $\Sigma_t$ is defined and equals to $|\nabla u|$, which is positive and bounded for almost all $x\in \Sigma_t$.
  \item[(e)] For each $t>0$, $|\Sigma_t|=e^t|\Sigma'|$, and $|\Sigma_t|=e^t|\Sigma|$ if $\Sigma$ is outward minimizing.
\end{itemize}
For $n\geq 8$, the regularity and convergence are also true away from a closed singular set $Z$ of dimension at most $n-8$ and disjoint from $\bar{\Omega}$.
\end{thm}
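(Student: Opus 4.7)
The plan is to adopt the level-set/variational framework of Huisken--Ilmanen. I would characterize a weak solution as a locally Lipschitz $u : \Omega^c \to \mathbb{R}$ vanishing on $\Sigma$ such that for every compact $K \subset \Omega^c$ and every locally Lipschitz $v$ with $\{v \neq u\} \Subset K$,
\[
J^K_u(u) \leq J^K_u(v), \qquad J^K_u(w) := \int_K \bigl( |\nabla w| + w|\nabla u| \bigr)\, d\mathrm{vol}_g.
\]
Testing with $\max(u,t)$ and $\min(u,t)$ in this variational principle encodes (b): $\{u<t\}$ minimizes area in $\{u \geq t\}$ in the correct homology class, and $\{u>t\}$ strictly minimizes. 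Property (e) then follows from the coarea formula $\int_0^t |\Sigma_s|\,ds = \int_{\{u<t\}}|\nabla u|$ combined with \eqref{s2:IMCF2} and $|\nabla u|=H$ a.e., which gives $\tfrac{d}{dt}|\Sigma_t| = |\Sigma_t|$ and hence $|\Sigma_t| = e^t|\Sigma'|$.

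To produce such a $u$, I would use elliptic regularization: for $\varepsilon > 0$ solve
\[
\mathrm{div}_M\!\left(\frac{\nabla u_\varepsilon}{\sqrt{|\nabla u_\varepsilon|^2 + \varepsilon^2}}\right) = \sqrt{|\nabla u_\varepsilon|^2 + \varepsilon^2}
\]
on a large annular region, with $u_\varepsilon = 0$ on $\Sigma$ (or on $\Sigma \cup \partial W$ in the null-homologous case, where $W$ is the filled-in horizon region) and $u_\varepsilon = L$ on a distant outer boundary. The key geometric observation is that the downward translating graph $z = -u_\varepsilon(x)/\varepsilon$ in $(M \times \mathbb{R},\, g + dz^2)$ evolves as a smooth classical IMCF, so uniform $C^0$ and local gradient bounds follow from barriers built out of coordinate spheres in the asymptotically flat end and from the outward-minimizing horizon. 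Sending $\varepsilon \to 0$ and $L \to \infty$, a subsequence of $u_\varepsilon$ converges locally uniformly to a locally Lipschitz function $u$, and lower semicontinuity of $J^K$ against competitors taken from the $u_\varepsilon$ themselves upgrades the limit to a weak solution with the stated boundary and asymptotic behavior.

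The regularity statements (a), (c), (d) follow from the geometric measure theory of perimeter minimizers. Since each $\Sigma_t$ is the reduced boundary of an area minimizer in $\{u \geq t\}$, classical De Giorgi--Schoen--Simon regularity yields $\Sigma_t \in C^{1,\alpha}$ away from a closed singular set $Z$ of Hausdorff dimension at most $n-8$, which is empty when $n<8$. The monotone convergence $\Sigma_s \to \Sigma_t$ resp.\ $\Sigma_t'$ is Allard-type $C^{1,\beta}$ compactness for area-bounded minimizers on the regular set; that $Z$ is disjoint from $\bar\Omega$ uses smoothness of the initial datum $\Sigma$ and standard $\varepsilon$-regularity. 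Property (d) is read off \eqref{s2:IMCF2} as the first variation of area on each regular level. The main obstacle I anticipate is the uniform local gradient estimate for $u_\varepsilon$: the standard approach differentiates the regularized equation and applies the maximum principle to a quantity combining $|\nabla u_\varepsilon|$ with an auxiliary function built from the static potential $f$ of \eqref{s1:static-eqn}, but the barrier near $\partial M$ must be tuned so that the approximating flow is not pushed across the horizon.
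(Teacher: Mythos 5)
This theorem is stated as a citation of Huisken--Ilmanen \cite{HI01}; the paper does not reprove it, and Section~2 merely recalls the elliptic-regularization/level-set framework behind it, which is exactly what your sketch outlines (variational characterization via $J^K_u$, the regularized equation \eqref{s2:IMCF3} and its translating-graph interpretation, compactness as $\varepsilon \to 0$, and GMT regularity for $J_u$-minimizing boundaries). So the approach is the same in outline.

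Two small corrections of detail. First, the interior gradient estimate for $u_\varepsilon$ in \cite{HI01} is obtained geometrically, by a maximum-principle bound on the mean curvature of the translating graph compared against geodesic or coordinate spheres and the initial data; it does not involve the static potential $f$. The static equation \eqref{s1:static-eqn} plays no role in the existence and regularity theory and only enters the monotonicity computations of Sections~3--4. Second, $\Sigma_t$ minimizes the functional $J_u$ (perimeter plus a bounded bulk term), so it is an almost-minimizer of perimeter rather than a pure area minimizer; the De~Giorgi--Schoen--Simon regularity you invoke does apply to almost-minimizers, so the conclusion stands, but the phrase ``area minimizer in $\{u \geq t\}$'' should be read in that weaker sense.
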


Note that in \cite{Hei01}, Heidusch proved the optimal $C^{1,1}$ regularity for the level sets $\Sigma_t$ and $\Sigma_t'$ away from the singular set $Z$. The property (b) says that $\Omega_t=\{u<t\}$ and $\Omega_t'=\mathrm{int}\{u\leq t\}$ are minimizing hull and strictly minimizing hull in $\{u> t\}$. Here we call a set $E$ a minimizing hull in $G$ if $E$ minimizes area on the outside in $G$, that is, if
\begin{equation*}
  |\pt^*E\cap K|\leq~|\pt^*F\cap K|
\end{equation*}
for any $F$ of locally finite perimeter containing $E$ such that $F\setminus E\subset\subset G$, and any compact set $K$ containing $F\setminus E$. Here $\pt^*F$ denotes the reduced boundary of a set $F$ of locally finite perimeter. $E$ is called a strictly minimizing hull if equality implies that $F\cap G=E\cap G$. Define $E'$ to be the intersection of all strictly minimizing hulls in $G$ that contain $E$. Up to a set of measure zero, $E'$ may be realised by a countable intersection, so $E'$ itself is a strictly minimizing hull and open. We call $E'$ the strictly minimizing hull of $E$ in $G$.

The existence result of weak IMCF in Theorem \ref{s2:thm} was proved using a minimization principle (see \cite[\S 1]{HI01}), together with the elliptic regularization. Consider the following perturbed equation
\begin{equation}\label{s2:IMCF3}
\mathrm{div}_M\left(\frac{\nabla u^{\epsilon}}{\sqrt{|\nabla u^{\epsilon}|^2+\epsilon^2}}\right)=~\sqrt{|\nabla u^{\epsilon}|^2+\epsilon^2}
\end{equation}
on a large domain $\Omega_L=\{v<L\}$ defined using a subsolution $v$ of \eqref{s2:IMCF2}, with Dirichilet boundary condition $u^{\epsilon}=0$ on $\Sigma$ and $u^{\epsilon}=L-2$ on the boundary $\pt \Omega_L\setminus \Sigma$. This equation \eqref{s2:IMCF3} has the geometric interpretation that the downward translating graph
\begin{equation*}
  \hat{\Sigma}_t^{\epsilon}:=\mathrm{graph}\left(\frac{u^{\epsilon}(x)}{\epsilon}-\frac t{\epsilon}\right)
\end{equation*}
solves the smooth IMCF \eqref{s2:IMCF1} in the manifold $M\times \mathbb{R}$ of one dimension higher. Using the compactness theorem to pass the solutions of \eqref{s2:IMCF3} to limits as $\epsilon_i\to 0$, we obtain a family of cylinders in $M\times \mathbb{R}$, which sliced by $M\times \{0\}$ gives a family of hypersurfaces weakly solving \eqref{s2:IMCF2}. Similar techniques to show existence of weak solutions of geometric flows have been used by various authors, cf. \cite{ES,Il94,Moore,Mar,Sch08}.

From the argument in \cite[\S 3]{HI01}, we find that there exits a sequence of smooth function $u_i=u^{\epsilon_i}$ such that $u_i\to u$ locally uniformly in $\Omega^c$ to a function $u\in C^{0,1}(\Omega^c)$. $u_i$ and $u$ are uniformly bounded in $C^{0,1}(\Omega^c)$. For a.e. $t\geq 0$, the hypersurfaces $\hat{\Sigma}_t^i:=\hat{\Sigma}_t^{\epsilon_i}$ converges to  the cylinder $\hat{\Sigma}_t:=\Sigma_t\times \mathbb{R}$ locally in $C^{1,\alpha}$ away from the singular set $Z\times \mathbb{R}$. Moreover, as in \cite[\S 5]{HI01}, the mean curvature $H_{\hat{\Sigma}_t^i}$ of $\hat{\Sigma}_t^i$ converges to the weak mean curvature $H_{\hat{\Sigma}_t}$ of the cylinder $\hat{\Sigma}_t$ locally in $L^2$ sense for a.e. $t\geq 0$. Precisely,
\begin{equation}\label{s2:H-conv1}
  \int_{\hat{\Sigma}_t^i}\phi H_{\hat{\Sigma}_t^i}^2~\to~\int_{\hat{\Sigma}_t}\phi H_{\hat{\Sigma}_t}^2, \quad \mathrm{a.e.},~ t\geq 0
\end{equation}
for any cut-off function $\phi\in C_c^0(\Omega^c\times \mathbb{R})$. The weak second fundamental form $A_{\hat{\Sigma}_t}$ exists on $\hat{\Sigma}_t$ in $L^2$ and the lower semicontinuity implies
\begin{equation}\label{s2:A-weak}
  \int_{\hat{\Sigma}_t}|A_{\hat{\Sigma}_t}|^2\leq ~\liminf_{i\to\infty}\int_{\hat{\Sigma}_t^i}|A_{\hat{\Sigma}_t^i}|^2<~\infty
\end{equation}
for a.e. $t\geq 0$. Slicing this families $\hat{\Sigma}_t^i$ by $M\times \{0\}$, we obtain $\Sigma_t^{i}=\hat{\Sigma}_t^i\cap (M\times \{0\})$ and $\Sigma_t=\hat{\Sigma}_t\cap (M\times \{0\})$. Since $\hat{\Sigma}_t^i$ solves the smooth IMCF, its mean curvature in $M\times \mathbb{R}$ is
\begin{equation}\label{s2:H0}
  H_{\hat{\Sigma}_t^i}=\mathrm{div}_M\left(\frac{\nabla u^{\epsilon_i}}{\sqrt{|\nabla u^{\epsilon_i}|^2+\epsilon_i^2}}\right)=~\sqrt{|\nabla u^{\epsilon_i}|^2+\epsilon_i^2}.
\end{equation}
The mean curvature of $\Sigma_t^{i}$ considered as a hypersurface in $M$ is
\begin{align}\label{s2:H1}
  H_{\Sigma_t^{i}}= &\mathrm{div}_M\left(\frac{\nabla u^{\epsilon_i}}{|\nabla u^{\epsilon_i}|}\right)\nonumber\displaybreak[0]\\
  =&H_{\hat{\Sigma}_t^i}\frac{\sqrt{|\nabla u^{\epsilon_i}|^2+\epsilon_i^2}}{|\nabla u^{\epsilon_i}|} +\frac{\nabla u^{\epsilon_i}}{\sqrt{|\nabla u^{\epsilon_i}|^2+\epsilon_i^2}}\cdot \nabla\left( \frac{\sqrt{|\nabla u^{\epsilon_i}|^2+\epsilon_i^2}}{|\nabla u^{\epsilon_i}|}\right)\nonumber\displaybreak[0]\\
  =&H_{\hat{\Sigma}_t^i}\frac{\sqrt{|\nabla u^{\epsilon_i}|^2+\epsilon_i^2}}{|\nabla u^{\epsilon_i}|}-\epsilon_i^2\frac{\nabla u^{\epsilon_i}\cdot \nabla H_{\hat{\Sigma}_t^i} }{|\nabla u^{\epsilon_i}|^3H_{\hat{\Sigma}_t^i}}.
\end{align}
Since the limit function $u$ of $u^{\epsilon_i}$ has $|\nabla u|>0$ a.e. on $\Sigma_t$, using the weak convergence of $\nabla H_{\hat{\Sigma}_t^i}/H_{\hat{\Sigma}_t^i}$ (as in the proof of Lemma 5.2 in \cite{HI01}), we have that the second term on the right hand side of \eqref{s2:H1} converges to zero locally in $L^2$ sense as $\epsilon_i\to 0$. Thus, the mean curvature  $H_{\Sigma_t^{i}}$ of the sliced hypersurface $\Sigma_t^{i}$ converges to the weak mean curvature $H_{\Sigma_t}$ of $\Sigma_t$ locally in $L^2$ sense for a.e. $t\geq 0$.

\section{The smooth case}\label{sec:smooth}

As we mentioned in \S \ref{sec:1}, the key step to prove Theorem \ref{main-thm} is to show the monotonicity of $Q(t)$ defined in \eqref{s1:Qt-def} along the weak IMCF. In this section, we firstly show how to derive the monotonicity of $Q(t)$ in the smooth case. Let $\Sigma_t$ be a smooth solution of the IMCF \eqref{s2:IMCF1}.  It's well known that the following evolution equations for the area form $d\mu$ and mean curvature $H$ of $\Sigma_t$ in $(M^n,g)$ hold.
\begin{lem}
\begin{align}
   \pt_td\mu_t=&d\mu_t,\label{evl-measure}\\
\pt_tH=&-\Delta_{\Sigma} \frac{ 1}{H}-\frac{1}H\left(|A|^2+Ric(\nu,\nu)\right)\label{evl-H}
\end{align}
\end{lem}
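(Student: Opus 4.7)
The plan is to derive these two evolution equations from the general variation formulas for a hypersurface flow with normal velocity $\partial_t x=F\nu$ in a Riemannian manifold, and then specialize to the IMCF speed $F=1/H$. Both formulas are standard and follow by direct computation.

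First I would compute the evolution of the induced metric. Parametrizing the flow by $x:\Sigma\times[0,T)\to M$ with $\partial_tx=F\nu$, differentiating $g_{ij}=g(\partial_ix,\partial_jx)$ in $t$ and using the Weingarten identity $\nabla_{\partial_i}\nu=h_i^{\ k}\partial_k x$, one obtains $\partial_tg_{ij}=2Fh_{ij}$. Hence $\partial_t\log\sqrt{\det g_{ij}}=\tfrac12 g^{ij}\partial_tg_{ij}=FH$, which gives $\partial_t d\mu_t=FH\,d\mu_t$. Setting $F=1/H$ yields $\partial_td\mu_t=d\mu_t$, which is \eqref{evl-measure}.

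Next, I would establish the evolution of the second fundamental form. Using $h_{ij}=g(\nabla_i\nu,\partial_jx)$ together with $\partial_t\nu=-\nabla^{\Sigma}F$ (obtained by differentiating $g(\nu,\nu)=1$ and $g(\nu,\partial_ix)=0$), one derives the well-known formula
\begin{equation*}
\partial_th_{ij}=-\nabla_i\nabla_jF+Fh_{ik}h^{k}_{\ j}-F\,R_{\nu i\nu j},
\end{equation*}
where $R_{\nu i\nu j}$ denotes the component of the ambient Riemann tensor. Taking the trace with $H=g^{ij}h_{ij}$ and remembering the contribution from $\partial_tg^{ij}=-2Fh^{ij}$ gives
\begin{equation*}
\partial_tH=-2F|A|^2-\Delta_{\Sigma}F+F|A|^2-F\,\mathrm{Ric}(\nu,\nu)=-\Delta_{\Sigma}F-F(|A|^2+\mathrm{Ric}(\nu,\nu)).
\end{equation*}
Substituting $F=1/H$ produces \eqref{evl-H}.

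There is no substantive obstacle here, since both identities are standard in the theory of curvature flows (see, e.g., Huisken--Ilmanen \cite{HI01}); the only care needed is in tracking sign conventions for $h_{ij}$ and $\nu$, and in correctly separating the tangential part of $\partial_t\nu=-\nabla^{\Sigma}F$ from the ambient Hessian term when computing $\partial_th_{ij}$. Once these are handled, the two formulas drop out by specializing $F=1/H$.
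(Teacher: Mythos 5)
Your derivation is correct, and it follows the only natural route: derive the first-variation formulas for a general normal speed $F$ and then set $F=1/H$. The paper itself does not prove this lemma—it asserts it as well-known—so your proposal supplies precisely the standard computation being implicitly cited; the intermediate identities $\partial_t g_{ij}=2Fh_{ij}$, $\partial_t\nu=-\nabla^\Sigma F$, the evolution of $h_{ij}$, and the trace giving $\partial_t H=-\Delta_\Sigma F - F(|A|^2+\mathrm{Ric}(\nu,\nu))$ all check out (taking care, as you note, that sign conventions for the Riemann tensor and for $h_{ij}$ are chosen consistently so that $g^{ij}R_{\nu i\nu j}=\mathrm{Ric}(\nu,\nu)$). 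No gap.
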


Employing the above two evolution equations, we can derive the monotonicity of $Q(t)$ in the smooth case.
\begin{thm}
Let $\Sigma_t$ be a smooth solution of the IMCF \eqref{s2:IMCF1}. For any $0<t_1<t_2<T$, if $\Sigma_t$ is homologous to the horizon for all $t\in [t_1,t_2]$, then
\begin{equation*}
  Q(t_2)\leq ~Q(t_1)
\end{equation*}
with equality holds if and only if each $\Sigma_t$ is totally umbilic for $t\in [t_1,t_2]$. If $\Sigma_t$ is null-homologous for all $t\in [t_1,t_2]$, then
\begin{equation*}
  \tilde{Q}(t_2)\leq ~\tilde{Q}(t_1)
\end{equation*}
with equality holds if and only if each $\Sigma_t$ is totally umbilic  for $t\in [t_1,t_2]$, and $Q(t)$ is strictly decreasing in time $t\in [t_1,t_2]$, where
\begin{equation*}
  \tilde{Q}(t):=|\Sigma_t|^{-\frac{n-2}{n-1}}\int_{\Sigma_t}fHd\mu_t.
\end{equation*}
\end{thm}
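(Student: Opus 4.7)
The plan is to differentiate $Q(t)$ directly along the smooth IMCF and show $Q'(t)\le 0$, the constant $2(n-1)m\omega_{n-1}$ in the definition of $Q$ being calibrated so as to cancel a boundary contribution coming from the static equation for $f$. Since $\pt_t d\mu_t=d\mu_t$ gives $|\Sigma_t|=e^t|\Sigma|$ and hence $\frac{d}{dt}|\Sigma_t|^{-\frac{n-2}{n-1}}=-\frac{n-2}{n-1}|\Sigma_t|^{-\frac{n-2}{n-1}}$, the whole problem reduces to bounding $\frac{d}{dt}\int_{\Sigma_t}fH\,d\mu_t$ from above.

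For this derivative I would combine $\pt_t f=H^{-1}\langle\nabla f,\nu\rangle$ with \eqref{evl-H} and \eqref{evl-measure}, then integrate by parts the Laplacian term, using the Gauss formula $\Delta_\Sigma f=\Delta f-\nabla^2 f(\nu,\nu)-H\langle\nabla f,\nu\rangle$ together with the static identities \eqref{s1:static-eqn} to get $\Delta_\Sigma f=-f\,\mathrm{Ric}(\nu,\nu)-H\langle\nabla f,\nu\rangle$. The $\mathrm{Ric}(\nu,\nu)$ contribution arising from this identity exactly cancels the Ricci term in \eqref{evl-H}, and one is left with
\begin{align*}
\frac{d}{dt}\int_{\Sigma_t}fH\,d\mu_t \;=\; 2\int_{\Sigma_t}\langle\nabla f,\nu\rangle\,d\mu_t+\int_{\Sigma_t}fH\,d\mu_t-\int_{\Sigma_t}\frac{f}{H}|A|^2\,d\mu_t.
\end{align*}
The sharp pointwise inequality $|A|^2\ge H^2/(n-1)$, with equality if and only if $\Sigma_t$ is totally umbilic, bounds the last term below by $\frac{1}{n-1}\int fH\,d\mu_t$, giving
\begin{align*}
\frac{d}{dt}\int_{\Sigma_t}fH\,d\mu_t \;\le\; 2\int_{\Sigma_t}\langle\nabla f,\nu\rangle\,d\mu_t+\frac{n-2}{n-1}\int_{\Sigma_t}fH\,d\mu_t.
\end{align*}

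The remaining flux $\int_{\Sigma_t}\langle\nabla f,\nu\rangle\,d\mu_t$ I evaluate by the divergence theorem inside the region bounded by $\Sigma_t$, using $\Delta f=0$. When $\Sigma_t$ is null-homologous this bounded region lies inside $M$ and the flux vanishes. When $\Sigma_t$ is homologous to the horizon, the bounded region has $\pt M$ as its inner boundary, and an explicit computation in the coordinates \eqref{schwarz} using $f|_{\pt M}=0$, $(ff')|_{s_0}=m(n-2)s_0^{1-n}$, and $|\pt M|=\omega_{n-1}s_0^{n-1}$ gives $\int_{\Sigma_t}\langle\nabla f,\nu\rangle\,d\mu_t=(n-2)m\omega_{n-1}$.

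Assembling the pieces in the homologous case,
\begin{align*}
Q'(t)\le |\Sigma_t|^{-\frac{n-2}{n-1}}\Bigl(-\tfrac{n-2}{n-1}\cdot 2(n-1)m\omega_{n-1}+2(n-2)m\omega_{n-1}\Bigr)=0,
\end{align*}
with equality iff each $\Sigma_t$ is umbilic, proving the first statement. In the null-homologous case the flux vanishes, so $\tilde Q'(t)\le 0$ by the same umbilic inequality, while for $Q(t)$ the extra term $-\tfrac{n-2}{n-1}\cdot 2(n-1)m\omega_{n-1}|\Sigma_t|^{-\frac{n-2}{n-1}}=-2(n-2)m\omega_{n-1}|\Sigma_t|^{-\frac{n-2}{n-1}}$ is strictly negative and uncompensated, forcing $Q'(t)<0$. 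The main bookkeeping concern is tracking the orientation of $\pt M$ correctly inside the divergence theorem; once the sign is right, the two $2(n-2)m\omega_{n-1}$ contributions cancel exactly, which is precisely why the constant $2(n-1)m\omega_{n-1}$ appears in the definition of $Q$.
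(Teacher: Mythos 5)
Your proposal follows essentially the same argument as the paper: differentiate $\int_{\Sigma_t}fH\,d\mu_t$ using the evolution equations, integrate by parts the $\Delta_\Sigma(1/H)$ term, cancel the Ricci contributions via the static identity $\Delta_\Sigma f+f\,\mathrm{Ric}(\nu,\nu)=-H\langle\nabla f,\nu\rangle$, apply $|A|^2\ge H^2/(n-1)$, and evaluate the flux $\int_{\Sigma_t}\langle\nabla f,\nu\rangle\,d\mu_t$ by the divergence theorem in each homology class. The computation, the constant calibration, and the equality characterization all match the paper's proof.
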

\proof
The case that $\Sigma_t$ is homologous to horizon has been treated in \cite{BHW,Li-Wei-Schwar}. For convenience of readers, we include the proof here.  Using the evolution equations \eqref{evl-measure}--\eqref{evl-H},
\begin{align}
  \frac d{dt}\int_{\Sigma_t}fHd\mu_t= & \int_{\Sigma_t}\left(\pt_tfH+f\pt_tH+fH\right)d\mu_t\nonumber \displaybreak[0]\\
  = & \int_{\Sigma_t}\biggl(\langle{\nabla}f,\nu\rangle-f\Delta_{\Sigma}\frac 1H-\frac fH\left(|A|^2+{Ric}(\nu,\nu)\right)+fH\biggr)d\mu_t\nonumber\displaybreak[0]\\
  \leq & \int_{\Sigma_t}\biggl(\langle{\nabla}f,\nu\rangle-\frac 1H(\Delta_{\Sigma} f+f{Ric}(\nu,\nu))+\frac{n-2}{n-1}fH\biggr)d\mu_t,\label{eq4-2}
\end{align}
where we used $|A|^2\geq H^2/{(n-1)}$ in the last inequality. Combining the identity
\begin{equation*}
  \Delta_{\Sigma} f=\Delta f-\nabla^2f(\nu,\nu)-H\nu\cdot \nabla f
\end{equation*}
and the static equation \eqref{s1:static-eqn}, we have
\begin{equation}\label{eq3-0}
  \Delta_{\Sigma} f+f\mathrm{Ric}(\nu,\nu)=-H\nu\cdot \nabla f.
\end{equation}
Substituting \eqref{eq3-0} into \eqref{eq4-2} yields that
\begin{align}\label{eq4-1}
    \frac d{dt}\int_{\Sigma_t}fHd\mu_t\leq&\int_{\Sigma_t}\left(\frac{n-2}{n-1}fH+2\langle{\nabla}f,\nu\rangle\right)d\mu_t.
\end{align}
If equality holds in \eqref{eq4-1}, then $|A|^2=H^2/{(n-1)}$ and $\Sigma_t$ is totally umbilical.

If $\Sigma_t$ is homologous to the horizon for all $t\in [t_1,t_2]$, then denote $\Omega_t$ denote the region bounded by $\Sigma_t$ and the horizon $\pt M$. Applying the divergence theorem and noting that ${\Delta}f=0$ on $M$, we get
\begin{align*}
    \int_{\Sigma_t}\langle{\nabla}f,\nu\rangle d\mu_t=&\int_{\Omega_t}{\Delta}f +\int_{\pt M}\nabla f\cdot \nu_{\pt M}=m(n-2)\omega_{n-1}
\end{align*}
which is a constant. Thus we obtain
\begin{align}\label{eq3-1}
    \frac d{dt}\left(\int_{\Sigma_t}fHd\mu_t+2(n-1)m\omega_{n-1}\right)\leq&~\frac{n-2}{n-1}\left(\int_{\Sigma_t}fHd\mu_t+2(n-1)m\omega_{n-1}\right).
\end{align}
If $\Sigma_t=\pt \Omega_t$ is null-homologous for all $t\in [t_1,t_2]$, we have
\begin{align*}
    \int_{\Sigma_t}\langle{\nabla}f,\nu\rangle d\mu_t=&\int_{\Omega_t}{\Delta}f =0.
\end{align*}
Then
\begin{align}\label{eq3-2}
    \frac d{dt}\int_{\Sigma_t}fHd\mu_t\leq&~\frac{n-2}{n-1}\int_{\Sigma_t}fHd\mu_t.
\end{align}
Thus the theorem follows directly from \eqref{eq3-1}--\eqref{eq3-2} and the evolution equation of the area $|\Sigma_t|$
\begin{equation*}
  \frac d{dt}|\Sigma_t|=|\Sigma_t|.
\end{equation*}
\endproof

\section{The monotonicity}\label{sec:thm1}

Firstly, we prove the following lemma which was inspired by Lemma A.1 of \cite{Fre-Sch-2014}.
\begin{lem}
Suppose that $\Omega$ is a smooth bounded domain in $(M^n,g)$ and $\Sigma=\pt\Omega\setminus\pt M$.  Let $u:\Omega^c\ra \mathbb{R}_+$ be a smooth proper function with $u|_{\Sigma}=0$. Let $t>0$, $\Omega_t=\{u\leq t\}$ and $\Phi: (0,t)\ra \mathbb{R}_+$ be Lipschitz and compactly supported in $(0,t)$. Then $\varphi=\Phi\circ u:\Omega_t\ra \mathbb{R}_+$ satisfies
\begin{equation}\label{s3:1}
  -\int_{\Omega_t}f\nabla\varphi\cdot \nu Hdv_g~=~\int_{\Omega_t}\varphi\left(2\nabla f\cdot\nu H+fH^2-f|A|^2\right)dv_g,
\end{equation}
where $\nu,H,A$ denote the unit outward normal, mean curvature and second fundamental form of the level sets of $u$, $\nabla$ be the gradient operator on $(M^n,g)$ and $\nabla\varphi\cdot\nu=g(\nabla\varphi,\nu)$.
\end{lem}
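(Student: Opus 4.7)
The plan is to apply the divergence theorem to the vector field $X=\varphi f H\nu$ on $\Omega_t$ and then reduce the remaining $\nabla H\cdot\nu$ contribution via integration by parts on each level set $\Sigma_s=\{u=s\}$. Since $\varphi=\Phi\circ u$ vanishes on $\Sigma=\{u=0\}$ and on $\{u=t\}$ by the compact support hypothesis on $\Phi$, the boundary integral of $X$ vanishes; expanding the divergence using $\mathrm{div}(\nu)=H$ yields
\begin{equation*}
    0=\int_{\Omega_t}fH\,\nabla\varphi\cdot\nu\,dv_g+\int_{\Omega_t}\varphi\bigl(H\,\nabla f\cdot\nu+f\,\nabla H\cdot\nu+fH^2\bigr)dv_g.
\end{equation*}
Comparing with \eqref{s3:1}, the proof is reduced to establishing the auxiliary identity
\begin{equation*}
    \int_{\Omega_t}\varphi\,f\,\nabla H\cdot\nu\,dv_g=\int_{\Omega_t}\varphi\bigl(H\,\nabla f\cdot\nu-f|A|^2\bigr)dv_g.
\end{equation*}

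To prove this, I would slice by the level sets of $u$ using the coarea formula. The foliation $\Sigma_s$ moves with normal speed $F:=1/|\nabla u|$, so $\nabla H\cdot\nu/|\nabla u|=\partial_s H$, and the standard evolution equation for mean curvature under a flow with speed $F\nu$ gives $\partial_s H=-\Delta_\Sigma F-F(|A|^2+\mathrm{Ric}(\nu,\nu))$. Coarea recasts the left-hand side of the auxiliary identity as $\int_0^t\Phi(s)\int_{\Sigma_s}f\,\partial_s H\,d\mu_s\,ds$; since $\Sigma_s$ is a closed hypersurface, I can integrate by parts on $\Sigma_s$ to shift $\Delta_\Sigma$ from $F$ onto $f$ with no boundary terms, and then invoke the static identity \eqref{eq3-0}, $\Delta_\Sigma f+f\,\mathrm{Ric}(\nu,\nu)=-H\,\nabla f\cdot\nu$, to eliminate both the second-order term and the Ricci contribution in one stroke. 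This yields
\begin{equation*}
    \int_{\Sigma_s}f\,\partial_s H\,d\mu_s=\int_{\Sigma_s}\frac{1}{|\nabla u|}\bigl(H\,\nabla f\cdot\nu-f|A|^2\bigr)d\mu_s,
\end{equation*}
and applying the coarea formula in reverse delivers the auxiliary identity.

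The main obstacle is that there is no pointwise identity connecting $\nabla H\cdot\nu$ to $|A|^2$ and $\nabla f\cdot\nu$, so one cannot simply manipulate the divergence of $\varphi fH\nu$ on $\Omega_t$ to reach the claim. The desired relation emerges only after slicing by the level sets and performing integration by parts on the closed hypersurfaces $\Sigma_s$, where the tangential Laplacian can be transferred from the flow speed $F$ onto the potential $f$; this is precisely the step at which the static equation \eqref{s1:static-eqn} of the Schwarzschild space, through its consequence \eqref{eq3-0}, is indispensable.
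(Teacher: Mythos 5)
Your proposal is correct and follows essentially the same route as the paper's proof: both apply the divergence theorem to $\varphi fH\nu$ with vanishing boundary term, and both reduce the $\nabla H\cdot\nu$ contribution to $H\,\nabla f\cdot\nu - f|A|^2$ by slicing with the coarea formula, invoking the second-variation/evolution formula for $H$, integrating by parts on the closed slices $\Sigma_s$, and using the static identity \eqref{eq3-0}. The only cosmetic differences are the order of operations (you apply the divergence theorem first, the paper establishes the slice identity first) and that you use $\mathrm{div}\,\nu = H$ pointwise a.e.\ on $\{\nabla u\neq 0\}$ while the paper derives the equivalent $\int f\varphi H\,\mathrm{div}\,\nu = \int f\varphi H^2$ via a separate coarea argument; the paper also records the approximation step from $C^1$ to Lipschitz $\Phi$, which you omit but which is routine.
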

\proof
The Sard's theorem implies that the level set $\Sigma_s=\{x\in\Omega^c: u(x)=s\}$ is regular ($\nabla u\neq 0$ on $\Sigma_s$) for a.e. $s>0$. Let $U\subset\Omega^c$ be the open subset where $\nabla u\neq 0$. For any regular level set $\Sigma_s$ with outward unit normal $\nu$, in $\Sigma_s\cap U$ the variation vector field along $\Sigma_s$ is $\nabla u/{|\nabla u|^2}$ and $\nu=\nabla u/{|\nabla u|}$.  By the second variation formula for area, we have
\begin{equation}\label{s3:lem1-1}
  -\frac 1{|\nabla u|}\nu\cdot\nabla H~=~\Delta_{\Sigma_s}|\nabla u|^{-1}+\frac 1{|\nabla u|}\left(|A|^2+\mathrm{Ric}(\nu,\nu)\right)
\end{equation}
in $\Sigma_s\cap U$, where $\Delta_{\Sigma_s}$ denotes the Laplacian operator with respect to the induced metric on $\Sigma_s$. We multiply \eqref{s3:lem1-1} by $f$ and integrate over $\Sigma_s$.
\begin{align*}
  -\int_{\Sigma_s}\frac f{|\nabla u|}\nu\cdot\nabla Hd\mu_s~=&~\int_{\Sigma_s}f\Delta_{\Sigma_s}|\nabla u|^{-1}+\int_{\Sigma_s}\frac f{|\nabla u|}\left(|A|^2+\mathrm{Ric}(\nu,\nu)\right)d\mu_s\\
  =&~\int_{\Sigma_s}\frac 1{|\nabla u|}\left(\Delta_{\Sigma_s}f+f|A|^2+f\mathrm{Ric}(\nu,\nu)\right)d\mu_s,
\end{align*}
where we used the divergence theorem in the second equality. Applying the identity \eqref{eq3-0}, we obtain
\begin{align}\label{s3:lem1-2}
  -\int_{\Sigma_s}\frac f{|\nabla u|}\nu\cdot\nabla Hd\mu_s~=&~\int_{\Sigma_s}\frac 1{|\nabla u|}\left(-H\nu\cdot \nabla f+f|A|^2\right)d\mu_s.
\end{align}
As $\Sigma_s$ is regular for a.e. $s>0$, the coarea formula and \eqref{s3:lem1-2} imply that
\begin{align}\label{s3:lem1-3}
   \int_{\Omega_t}f\varphi\nu\cdot\nabla Hdv_g=& ~\int_0^t\Phi(s)\int_{\Sigma_s} \frac f{|\nabla u|}\nu\cdot\nabla Hd\mu_sds\nonumber\\
   =&~ \int_0^t\Phi(s)\int_{\Sigma_s}\frac 1{|\nabla u|}\left(H\nu\cdot \nabla f-f|A|^2\right)d\mu_sds\nonumber\\
   =&~\int_{\Omega_t}\varphi\left(H\nu\cdot \nabla f-f|A|^2\right)dv_g.
\end{align}
We firstly assume that $\Phi\in C^1$. Then in the open subset $U=\{x\in \Omega^c: |\nabla u|\neq 0\}$, we have
\begin{align}\label{s3:lem1-4}
  \mathrm{div}(f\varphi H\nu)=&~\varphi\nabla f\cdot\nu H+f\nabla\varphi\cdot\nu H+f\varphi \nu\cdot\nabla H+f\varphi H\mathrm{div } ~\nu\nonumber\\
  =&~\varphi\nabla f\cdot\nu H+f\Phi'\circ u|\nabla u|H+f\varphi \nu\cdot\nabla H+f\varphi H\mathrm{div } ~\nu,
\end{align}
where $\mathrm{div}$ is the divergence operator on $(M^n,g)$. Since $\varphi$ is compactly supported in $\Omega_t$, integrating \eqref{s3:lem1-4} yields that
\begin{align}\label{s3:lem1-5}
 -\int_{\Omega_t}f\nabla\varphi\cdot \nu Hdv_g= & ~-\int_{\Omega_t} \mathrm{div}(f\varphi H\nu)dv_g \nonumber\\
   &\qquad +\int_{\Omega_t}\left(\varphi\nabla f\cdot\nu H+f\varphi \nu\cdot\nabla H+f\varphi H\mathrm{div } ~\nu\right)dv_g\nonumber\\
   =&~\int_{\Omega_t}\varphi\left(2H\nabla f\cdot \nu-f|A|^2\right)dv_g+\int_{\Omega_t}f\varphi H\mathrm{div } ~\nu dv_g,
\end{align}
where we used the divergence theorem and \eqref{s3:lem1-3}.

We now deal with the last term in \eqref{s3:lem1-5}.
Since $\Sigma_s$ is regular for a.e. $s>0$, the co-area formula and the first variation formula for area imply that
\begin{align}\label{s3:lem1-6}
  \int_{\Omega_t}f\varphi H\mathrm{div }(\nu) dv_g =& ~\int_0^t\int_{\Sigma_s}\frac{f\varphi H}{|\nabla u|}\mathrm{div}(\nu) d\mu_sds \nonumber\displaybreak[0]\\
   =&~ \int_0^t\int_{\Sigma_s}\frac{f\varphi H}{|\nabla u|}\mathrm{div}_{\Sigma_s}\nu d\mu_sds \nonumber\displaybreak[0]\\
   =&~\int_0^t\int_{\Sigma_s}\mathrm{div}_{\Sigma_s}\left(\frac{f\varphi H}{|\nabla u|}\nu\right) d\mu_sds \nonumber\\
   =&~\int_0^t\int_{\Sigma_s}\frac{f\varphi H^2}{|\nabla u|}d\mu_sds\nonumber\displaybreak[0]\\
   =&~\int_{\Omega_t}f\varphi H^2dv_g,
\end{align}
where in the second equality we used the fact that $\mathrm{div}(\nu)=\mathrm{div}_{\Sigma_s}\nu$ on $\Sigma_s\cap U$. Substituting \eqref{s3:lem1-6} into \eqref{s3:lem1-5} yields that
\begin{equation}\label{s3:lem1-7}
   -\int_{\Omega_t}f\nabla\varphi\cdot \nu Hdv_g~=~\int_{\Omega_t}\varphi\left(2\nabla f\cdot\nu H+fH^2-f|A|^2\right)dv_g
\end{equation}
for $\Phi\in C^1$. Since Lipschitz function can be approximated by $C^1$ function up to a set of measure zero (see \cite[p.32]{LSim}), we conclude that \eqref{s3:lem1-7} also holds for Lipschitz function $\Phi$ by approximation.
\endproof

\subsection{The case that $\Sigma$ is homologous to horizon}
\begin{lem}\label{prop-mono}
Let $\Omega$ be a smooth bounded domain in the Schwarzschild space $(M^n,g)$. Suppose that the boundary $\pt\Omega=\Sigma\cup\pt M$ and $\Sigma$ is outward minimizing. Let $\{\Sigma_t\}$ be the weak solution of IMCF in $\Omega^c=M\setminus\Omega$ with initial data $\Sigma$. Then for all $0<\bar{t}<t$,
\begin{align}\label{s3:monot-1}
  \int_{\Sigma_t}fHd\mu_t~\leq&~\int_{\Sigma_{\bar{t}}}fHd\mu+\frac{n-2}{n-1}\int_{\bar{t}}^t\left(\int_{\Sigma_s}fHd\mu_s+2(n-1)m\omega_{n-1}\right)ds
\end{align}
\end{lem}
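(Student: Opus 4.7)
The plan is to upgrade the smooth monotonicity from \S\ref{sec:smooth} to the weak setting by applying the integral identity of the preceding lemma to the smooth elliptic regularization approximants $u^{\epsilon_i}$ of \eqref{s2:IMCF3}, and then passing to the limit $\epsilon_i \to 0$ using the convergence properties recorded at the end of \S\ref{sec:2}. The Newton inequality $|A|^2 \geq H^2/(n-1)$ will again absorb the second fundamental form into the mean curvature, and the homologous-to-horizon assumption will let the divergence theorem together with $\Delta f=0$ convert the gradient-of-$f$ term into the explicit constant $m(n-2)\omega_{n-1}$.

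Concretely, for fixed $0<\bar t<t$ (initially chosen so that the relevant boundary integrals are well-defined), I would take $\Phi_\delta$ to be a Lipschitz hat function equal to $1$ on $[\bar t,t]$, supported in $(\bar t-\delta,t+\delta)$, and affine on each short slope, and set $\varphi_i=\Phi_\delta\circ u^{\epsilon_i}$. Applying the preceding lemma to the smooth function $u^{\epsilon_i}$ and using $|A^i|^2 \geq H_{\Sigma^i}^2/(n-1)$ yields
\[
-\int f\,\nabla\varphi_i\cdot\nu_i H_i\,dv_g \;\leq\; \int \varphi_i\Bigl(2\nabla f\cdot\nu_i H_i+\frac{n-2}{n-1}fH_i^2\Bigr)dv_g,
\]
where $\nu_i,H_i$ denote the outward unit normal and mean curvature of the smooth level sets of $u^{\epsilon_i}$ in $M$. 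Since $\nabla\varphi_i\cdot\nu_i=\Phi_\delta'(u^{\epsilon_i})|\nabla u^{\epsilon_i}|$, the co-area formula rewrites both sides as iterated integrals over the level sets $\Sigma_s^i$; sending $\delta\to 0$ and invoking Lebesgue differentiation produces
\[
\int_{\Sigma_t^i}fH_i\,d\mu-\int_{\Sigma_{\bar t}^i}fH_i\,d\mu \;\leq\;\int_{\bar t}^t\int_{\Sigma_s^i}\frac{H_i}{|\nabla u^{\epsilon_i}|}\Bigl(2\nabla f\cdot\nu_i+\frac{n-2}{n-1}fH_i\Bigr)d\mu\,ds.
\]

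The next step is to pass $\epsilon_i\to 0$. The $L^2$ convergence $H_{\Sigma_s^i}\to H_{\Sigma_s}$ for a.e.\ $s$ derived at the end of \S\ref{sec:2}, together with the identity $|\nabla u|=H_{\Sigma_s}$ a.e.\ on $\Sigma_s$, makes the factor $H_i/|\nabla u^{\epsilon_i}|$ tend to $1$ in an appropriate sense, while the $C^{1,\beta}$ convergence $\Sigma_s^i\to\Sigma_s$ away from the singular set handles the normal-direction term. The limit inequality is
\[
\int_{\Sigma_t}fH\,d\mu_t-\int_{\Sigma_{\bar t}}fH\,d\mu_{\bar t}\;\leq\;\int_{\bar t}^t\int_{\Sigma_s}\Bigl(2\nabla f\cdot\nu+\frac{n-2}{n-1}fH\Bigr)d\mu_s\,ds.
\]
Finally, since each $\Sigma_s$ is homologous to $\partial M$, applying the divergence theorem to $f$ on the region bounded by $\Sigma_s$ and $\partial M$ and using $\Delta f=0$ gives $\int_{\Sigma_s}\nabla f\cdot\nu\,d\mu_s=\int_{\partial M}\nabla f\cdot\nu_{\partial M}=m(n-2)\omega_{n-1}$; substituting produces \eqref{s3:monot-1}. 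The a.e.\ restriction on $(\bar t,t)$ is then removed afterwards using the $C^{1,\beta}$ convergence \eqref{s3:c}.

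The main obstacle is the passage $\epsilon_i\to 0$ in the quadratic term involving $fH_i^2/|\nabla u^{\epsilon_i}|$: since $H_i\to H$ only in $L^2$ and one is integrating against $1/|\nabla u^{\epsilon_i}|$, one needs both a lower bound on $|\nabla u^{\epsilon_i}|$ away from zero on the regular part of $\Sigma_s^i$ (so that $H_i/|\nabla u^{\epsilon_i}|\to 1$ in a strong enough sense) and control of the $\epsilon_i^2$ correction term in \eqref{s2:H1}, both of which are precisely what the discussion following \eqref{s2:H1} and the convergence \eqref{s2:H-conv1} are designed to supply.
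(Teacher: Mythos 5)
Your overall strategy matches the paper: apply the preceding integral identity to the smooth approximants $u^{\epsilon_i}$ of the elliptic regularization, use $|A_i|^2\ge H_i^2/(n-1)$ to discard the second fundamental form, pass to the limit using the $L^2$ convergence of mean curvatures, and finally use the divergence theorem with $\Delta f=0$ to replace $\int_{\Sigma_s}\nabla f\cdot\nu$ by the constant $m(n-2)\omega_{n-1}$. The reordering of limits (you first send $\delta\to 0$ at fixed $\epsilon_i$, then $\epsilon_i\to 0$; the paper passes $i\to\infty$ first while $\Phi$ is still general, then specializes to the hat function and sends $\delta\to 0$ at the level of the weak solution) is cosmetic, provided you keep track of the fact that the regular values of $u^{\epsilon_i}$ depend on $i$ and that the $L^2$ convergence of $H_i$ on level sets is itself only an a.e.\ statement in $s$.

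The genuine gap is your final sentence: ``The a.e.\ restriction on $(\bar t,t)$ is then removed afterwards using the $C^{1,\beta}$ convergence \eqref{s3:c}.'' This is not enough. The weak IMCF level sets can jump: by \eqref{s3:c}, as $s\searrow\bar t$ one has $\Sigma_s\to\Sigma_{\bar t}'=\partial\{u>\bar t\}$, which in general is \emph{not} $\Sigma_{\bar t}=\partial\{u<\bar t\}$. So approximating a bad $\bar t$ from the right gives $\int_{\Sigma_{\bar t}'}fH\,d\mu$ on the right-hand side, not $\int_{\Sigma_{\bar t}}fH\,d\mu$. To bridge this you need the specific structure of the weak mean curvature on the strictly minimizing hull recorded in \cite[(1.15)]{HI01} together with Heidusch's $C^{1,1}$ regularity: $H=0$ on $\Sigma_{\bar t}'\setminus(\Sigma_{\bar t}\cup Z)$ and $H_{\Sigma_{\bar t}'}=H_{\Sigma_{\bar t}}\ge 0$ a.e.\ on $\Sigma_{\bar t}'\cap\Sigma_{\bar t}$, from which (using $f\ge 0$) one gets $\int_{\Sigma_{\bar t}'}fH\,d\mu\le\int_{\Sigma_{\bar t}}fH\,d\mu$, in the direction compatible with the desired inequality. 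Likewise, for the upper endpoint you approximate from the left, $t_i\nearrow t$, and there the convergence to $\Sigma_t$ (not to $\Sigma_t'$) together with the weak (Riesz) convergence of $H\nu\,d\mu$ is what gives $\int_{\Sigma_{t_i}}fH\to\int_{\Sigma_t}fH$. Without this $\Sigma_{\bar t}$ vs.\ $\Sigma_{\bar t}'$ analysis, the extension from a.e.\ $\bar t<t$ to all $\bar t<t$ — which is what the lemma actually claims — does not go through.
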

\proof
As in the discussion in \S \ref{sec:2}, the weak solution of IMCF $u\in C^{0,1}_{\mathrm{loc}}(\Omega^c)$ can be approximated by smooth proper functions $u_i$ locally uniformly in $\Omega^c$, with $C^{1,\alpha}$ convergence of the level sets $\Sigma_s^i$ away from the singular set $Z$ and $L^2$ convergence of the weak mean curvature $H_i:=H_{\Sigma_s^i}$ of level sets for a.e. $s>0$. Moreover, we can show that $H_i$ converges to the mean curvature $H$ of the weak solution $\Sigma_s$ of IMCF in locally $L^2$ sense in any domain $\Omega_t$. In fact, by the coarea formula and \eqref{s2:H0}--\eqref{s2:H1},
\begin{align*}
  \int_{\Omega_t^i}\phi H_i =& \int_0^t\int_{\Sigma_s^i}\frac{\phi H_i}{|\nabla u_i|}d\mu_s^ids \\
   =& \int_0^t\int_{\Sigma_s^i}\phi \left(\frac{|\nabla u_i|^2+\epsilon_i^2}{|\nabla u_i|^2}-\epsilon_i^2\frac{\nabla u_i\cdot \nabla H_{\hat{\Sigma}_s^i} }{|\nabla u_i|^4H_{\hat{\Sigma}_s^i}}\right)d\mu_s^ids,
\end{align*}
where $\phi\in C_c^0(\Omega_t)$, $\Omega_t^i=\{x\in\Omega^c: u_i(x)<t\}$ and $\Omega_t=\{x\in\Omega^c: u(x)<t\}$. By the fact that $|\nabla u|>0$ a.e. on $\Sigma_s$ and the weak convergence of $\nabla H_{\hat{\Sigma}_s^i}/H_{\hat{\Sigma}_s^i}$, we have that $\int_{\Omega_t^i}\phi H_i\to \int_{\Omega_t}\phi H$ as $i\to\infty$. Similarly we have the convergence of $\int_{\Omega_t^i}\phi H_i^2\to \int_{\Omega_t}\phi H^2$.

For any nonnegative Lipschitz function $\Phi\in \mathrm{Lip}(0,t)$ with compact support in $(0,t)$ and $\varphi_i=\Phi\circ u_i$, by \eqref{s3:1} we have
\begin{align*}
  -\int_{\Omega_t}f\nabla\varphi_i\cdot \nu_i H_idv_g~=&~\int_{\Omega_t}\varphi_i\left(2\nabla f\cdot\nu_i H_i+fH_i^2-f|A_i|^2\right)dv_g\\
  \leq &~\int_{\Omega_t}\varphi_i\left(2\nabla f\cdot\nu_i H_i+\frac{n-2}{n-1}fH_i^2\right)dv_g,
\end{align*}
where we used $|A_i|^2\geq H_i^2/{(n-1)}$ in the last inequality. Taking the limit of $i\to\infty$, and using the convergence of $u_i$ and  $H_i$, we obtain that
\begin{align}\label{s3:lem2-1}
  -\int_{\Omega_t}f\nabla\varphi\cdot \nu Hdv_g~ \leq &~\int_{\Omega_t}\varphi\left(2\nabla f\cdot\nu H+\frac{n-2}{n-1}fH^2\right)dv_g
\end{align}
As $u$ is the weak solution of IMCF, we have $H=|\nabla u|$ a.e. in $\Omega_t$.  Also note that by Rademacher's theorem, the Lipschitz function $\Phi$ is differentiable a.e. in $(0,t)$. From the coarea formula and \eqref{s3:lem2-1}, we have
\begin{align}\label{s3:lem2-2}
  -\int_0^t\Phi'(s)\int_{\Sigma_s}fHd\mu_sds =& -\int_{\Omega_t}\Phi'(s)\nabla u\cdot\nu fHdv_g=~-\int_{\Omega_t}f\nabla\varphi\cdot \nu Hdv_g \nonumber\\
   \leq & ~\int_{\Omega_t}\varphi\left(2\nabla f\cdot\nu H+\frac{n-2}{n-1}fH^2\right)dv_g\nonumber\\
   =&~\int_{\Omega_t}\varphi\left(2\nabla f\cdot\nu +\frac{n-2}{n-1}fH\right)|\nabla u|dv_g\nonumber\\
   =&~\int_0^t\Phi(s)\int_{\Sigma_s}\left(2\nabla f\cdot\nu +\frac{n-2}{n-1}fH\right)d\mu_sds.
\end{align}
For any $0<\bar{t}<t$ and $0<\delta<(t-\bar{t})/2$, define $\Phi$ by
\begin{equation*}
  \Phi(s)~=~\left\{\begin{array}{ll}
                     0, ~& ~\mathrm{on}~[0,\bar{t}] \\
                     (s-\bar{t})/{\delta},~ & ~\mathrm{on}~[\bar{t},\bar{t}+\delta] \\
                     1,~ & ~\mathrm{on}~[\bar{t}+\delta,t-\delta] \\
                     (t-s)/{\delta},~ &~\mathrm{on}~ [t-\delta,t]
                   \end{array}\right.
\end{equation*}
The left hand side of \eqref{s3:lem2-2} is equal to
\begin{equation*}
  \frac 1{\delta}\int_{t-\delta}^t\int_{\Sigma_s}fHd\mu_sds-\frac 1{\delta}\int_{\bar{t}}^{\bar{t}+\delta}\int_{\Sigma_s}fHd\mu_sds
\end{equation*}
Since for a.e. $s>0$, the level set $\Sigma_s^i$ of $u_i$ converges to $\Sigma_s$ in $C^{1,\alpha}$ away from the singular set $Z$ of Hausdorff dimension at most $n-8$ with $L^2$ convergence of the weak mean curvature, taking the limits $\delta\to 0$ in \eqref{s3:lem2-2}, we find that for a.e. $0<\bar{t}<t$
\begin{equation}\label{s3:lem2-3}
   \int_{\Sigma_t}fHd\mu_t-\int_{\Sigma_{\bar{t}}}fHd\mu_{\bar{t}}~\leq~\int_{\bar{t}}^t\int_{\Sigma_s}\left(2\nabla f\cdot\nu+\frac{n-2}{n-1}fH\right)d\mu_sds.
\end{equation}

To show \eqref{s3:lem2-3} holds for all pair of $0<\bar{t}<t$, we use the $C^{1,\beta}$ convergence \eqref{s3:c} and the weak convergence of mean curvature. For any $t>0$, we can find a sequence of time $t_i\nearrow t$ such that $0<\bar{t}<t_i$ satisfies \eqref{s3:lem2-3}, then $\Sigma_{t_i}\to \Sigma_t$ in $C^{1,\beta}$ away from the singular set $Z$ as $i\to\infty$ by \eqref{s3:c}. As the weak mean curvature of $\Sigma_{t_i}$ equals to $|\nabla u|$ a.e. and is uniformly bounded for a.e. $x\in\Sigma_{t_i}$, it follows from the Riesz Representation theorem that (see (1.13) in \cite{HI01})
\begin{equation}\label{s3:Mean-WC}
  \int_{\Sigma_{t_i}}H_{\Sigma_{t_i}}\nu_{\Sigma_{t_i}}\cdot X~\to~\int_{\Sigma_t}H_{\Sigma_t}\nu_{\Sigma_t}\cdot X,\quad X\in C_c^0(TM).
\end{equation}
Then
\begin{align}\label{rem-1}
  \int_{\Sigma_t}fHd\mu_t =& ~\lim_{i\to\infty}\int_{\Sigma_{t_i}}fH_{t_i}d\mu_{t_i}
\end{align}
and \eqref{s3:lem2-3} holds for all $t>0$ and a.e. $\bar{t}>0$ with $\bar{t}<t$. Similarly for any $\bar{t}$ with $0<\bar{t}<t$, we can find a sequence of time $\bar{t}_i\searrow \bar{t}$ such that $0<\bar{t}_i<t$ satisfies \eqref{s3:lem2-3}. By the convergence \eqref{s3:c} and \eqref{s3:Mean-WC}, we have
\begin{equation}\label{s3:tb-1}
 \int_{\Sigma_{\bar{t}}'}fHd\mu_{\bar{t}}'~=~\lim_{i\to\infty} \int_{\Sigma_{\bar{t}_i}}fHd\mu_{\bar{t}_i}.
\end{equation}
Recall that Heidusch \cite{Hei01} proved the optimal local $C^{1,1}_{\mathrm{loc}}$ regularity for the level sets $\Sigma_{\bar{t}}$ and $\Sigma_{\bar{t}}'$ away from the singular set $Z$. By \cite[(1.15)]{HI01} the weak mean curvature $H$ of $\Sigma_{\bar{t}}$ and $\Sigma_{\bar{t}}'$ satisfy
\begin{equation*}
  H=0~~\mathrm{ on}~ \Sigma_{\bar{t}}'\setminus (\Sigma_{\bar{t}}\cup Z), \quad H_{\Sigma_{\bar{t}}'}=H_{\Sigma_{\bar{t}}}\geq 0, ~\mathrm{a.e.} ~\mathrm{on}~~ \Sigma_{\bar{t}}'\cap \Sigma_{\bar{t}}.
\end{equation*}
As the weak mean curvature $H$ is nonnegative on $\Sigma_{\bar{t}}$, we deduce that
\begin{equation}\label{s3:tb-2}
  \int_{\Sigma_{\bar{t}}'}fHd\mu_{\bar{t}'}~\leq ~\int_{\Sigma_{\bar{t}}}fHd\mu_{\bar{t}}.
\end{equation}
Thus by \eqref{rem-1}--\eqref{s3:tb-2}, we conclude that \eqref{s3:lem2-3} holds for all pair of $0<\bar{t}<t$. Since $\Sigma$ is assumed to be outward minimizing, \eqref{s3:lem2-3} is also true for $\bar{t}=0$.

Finally, for the first integral on the right hand side of \eqref{s3:lem2-3}, using the divergence theorem and noting that $\Delta f=0$ on $M$, we get
\begin{align}\label{s3:lem2-7}
    \int_{\Sigma_s}\nabla f\cdot \nu =&\int_{\Omega_s\cup\Omega}{\Delta}f +\int_{\pt M}\nabla f\cdot \nu_{\pt M} =~m(n-2)\omega_{n-1},
\end{align}
by first computing on $\Sigma_s^i$ and then passing to limits. Inserting \eqref{s3:lem2-7} into \eqref{s3:lem2-3}, we obtain the inequality \eqref{s3:monot-1} for all pair of $0<\bar{t}<t$.
\endproof

\begin{prop}\label{prop-1}
Under the assumption of Lemma \ref{prop-mono}, the quantity $Q(t)$ is monotone non-increasing for all $t$. Moreover, if $Q(t)=Q(\bar{t})$ for some pair $0<\bar{t}<t$, we have that $\Sigma_s$ is umbilic a.e. for a.e. $s\in [\bar{t},t]$.
\end{prop}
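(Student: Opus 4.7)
The plan is to combine the integral inequality of Lemma~\ref{prop-mono} with a Gronwall argument and the exponential growth of area to deduce monotonicity, and then to back-propagate equality through the same chain of inequalities to get umbilicity. Set
\[
\psi(s) := \int_{\Sigma_s} fH\, d\mu_s + 2(n-1)m\omega_{n-1},
\]
so that $Q(s) = |\Sigma_s|^{-(n-2)/(n-1)}\psi(s)$. Lemma~\ref{prop-mono} rewrites as $\psi(t)-\psi(\bar t) \leq \tfrac{n-2}{n-1}\int_{\bar t}^t \psi(s)\, ds$ for all $0<\bar t<t$, and since $\Sigma$ is outward minimizing, Theorem~\ref{s2:thm2-1}(e) gives $|\Sigma_s| = e^s |\Sigma|$. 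The integral form of Gronwall's inequality, applied to the locally bounded nonnegative function $\psi$ with $\bar t$ fixed, yields
\[
\psi(t) \leq \psi(\bar t)\exp\!\left(\tfrac{n-2}{n-1}(t-\bar t)\right).
\]
Multiplying by $|\Sigma_t|^{-(n-2)/(n-1)} = |\Sigma_{\bar t}|^{-(n-2)/(n-1)} e^{-(n-2)(t-\bar t)/(n-1)}$ then gives $Q(t)\leq Q(\bar t)$, proving the monotonicity.

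For the rigidity statement, assume $Q(t)=Q(\bar t)$ for some $0<\bar t<t$. Since $Q$ is non-increasing, $Q$ must be constant on $[\bar t,t]$, so the Gronwall bound is saturated, i.e.\ $\psi(s)=\psi(\bar t)\exp((n-2)(s-\bar t)/(n-1))$ for every $s\in[\bar t,t]$. Tracing back through the proof of Lemma~\ref{prop-mono}, the only slack in arriving at the integral inequality came from \eqref{s3:lem2-1}, whose sole use of an inequality was the pointwise bound $|A|^2 \geq H^2/(n-1)$. For nonnegative Lipschitz cutoffs $\Phi$ supported in $(\bar t,t)$ and $\varphi = \Phi\circ u$, the saturation forces
\[
\int_{\Omega^c} \varphi\, f\Bigl(|A|^2 - \tfrac{1}{n-1}H^2\Bigr) dv_g = 0.
\]
Since $f>0$, the integrand is nonnegative, and $\Phi$ is arbitrary, the coarea formula yields $|A|^2 = H^2/(n-1)$ a.e.\ on $\Sigma_s$ for a.e.\ $s\in[\bar t,t]$, which is the desired umbilicity.

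The main technical care needed is in the final step, since $A$ is only defined in the weak $L^2$ sense through the elliptic regularization $u_i$ and the lower semicontinuity \eqref{s2:A-weak}. The pointwise bound $|A_i|^2 \geq H_i^2/(n-1)$ holds on the smooth approximations; combined with the $L^2$ convergence of $H_i$ to $H$ and \eqref{s2:A-weak}, this transfers to $\int \varphi f|A|^2 \geq \int \varphi f H^2/(n-1)$ in the limit, so equality in the integrated inequality indeed forces $|A|^2=H^2/(n-1)$ a.e.\ in the weak sense. Beyond this, the proof is essentially a Gronwall exercise.
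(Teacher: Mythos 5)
Your proposal follows the same route as the paper: Gronwall's inequality applied to $\psi(t)=\int_{\Sigma_t}fH\,d\mu_t+2(n-1)m\omega_{n-1}$, combined with $|\Sigma_t|=e^t|\Sigma|$ from Theorem~\ref{s2:thm2-1}(e), gives the monotonicity of $Q$; and tracing equality back to the pointwise bound $|A|^2\geq H^2/(n-1)$ gives the rigidity. The paper states the rigidity step in one line (``from the proof of Lemma~\ref{prop-mono}''), and your elaboration is essentially what that line means.

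One small wrinkle in your last paragraph: the inequality $\int\varphi f|A|^2\geq\int\varphi f H^2/(n-1)$ for the weak objects does \emph{not} come from combining $|A_i|^2\geq H_i^2/(n-1)$ with the lower semicontinuity \eqref{s2:A-weak}. Lower semicontinuity goes the other way ($\int|A|^2\leq\liminf\int|A_i|^2$), so chaining it with the approximate pointwise bound leaves the two sides of your inequality uncomparable. The estimate $\int\varphi f|A|^2\geq\int\varphi f H^2/(n-1)$ is instead an immediate consequence of the \emph{pointwise} algebraic fact that any symmetric $(n-1)\times(n-1)$ tensor with trace $H$ satisfies $|A|^2\geq H^2/(n-1)$, which holds a.e.\ for the weak $A$ directly. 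Lower semicontinuity enters in the opposite direction: saturation of the integrated inequality forces $\lim_i\int\varphi_i f\bigl(|A_i|^2-\tfrac{1}{n-1}H_i^2\bigr)=0$, and then \eqref{s2:A-weak} together with $L^2$ convergence of $H_i$ yields $\int\varphi f\bigl(|A|^2-\tfrac{1}{n-1}H^2\bigr)\leq 0$, which combined with the trivial $\geq 0$ closes the argument. The conclusion you reach is correct; only the stated justification swaps the roles of the two inequalities.
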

\proof
By Gronwall's lemma, \eqref{s3:monot-1} implies that
\begin{align*}
\int_{\Sigma_t}fHd\mu_t+2(n-1)m\omega_{n-1} \leq & ~\left(\int_{\Sigma_{\bar{t}}}fHd\mu+2(n-1)m\omega_{n-1}\right)e^{\frac{n-2}{n-1}(t-\bar{t})}
\end{align*}
for all $0<\bar{t}<t$. Since $\Sigma$ is outward minimizing, Theorem \ref{s2:thm2-1} implies that $|\Sigma_t|=e^t|\Sigma|$ for all $t\geq 0$. Then the quantity $Q(t)$ is monotone non-increasing for all $t\geq 0$.  If $Q(t)=Q(\bar{t})$ for some pair $0<\bar{t}<t$, from the proof of Lemma \ref{prop-mono}, we have that $H^2=(n-1)|A|^2$ a.e. on $\Sigma_s$ for a.e. $s\in [\bar{t},t]$.
\endproof

\subsection{The case that $\Sigma$ is null homologous}\label{sec:4-2}
Now we consider the case that the bounded domain $\Omega$ has boundary $\pt\Omega=\Sigma$, which is null-homologous and outward minimizing. By the argument in \cite[\S 6]{HI01}, we fill-in the region $W$ bounded by the horizon $\pt M$ to obtain a new space $\tilde{M}$, and then run the weak IMCF in $\tilde{M}$ with initial condition $\Sigma$, except that when the flow $\Sigma_t$ is nearly entering the filled-in region $W$, we jump to a strictly minimizing hull $F$ enclosing $\Omega_t\cup W$. Then we restart the flow from $\partial F$.
\begin{center}
\begin{tikzpicture}[ thick, fill opacity=0.8]
\filldraw[fill=gray!10,style=thick]
(-1.8,-1.8) [rounded corners=5pt]-- (-2.1,-1.2)
 [rounded corners=15pt]--(-1.8,-0.2)--(-1.7,0.9)--(-0.7,1.7)
 [rounded corners=5pt]--(1,3)--(1.7,3.3)--(2.3,3.5)--(2.9,3.4)
  [rounded corners=15pt] --(3.4,2.5)--(3.2,1.6)--(3.1,0.2)
  [rounded corners=5pt] --(3.3,-0.6)
   [rounded corners=15pt] --(2.8,-1.3)
   [rounded corners=5pt]--(0.4,-1.6)--(-1,-2)
    [rounded corners=15pt] --cycle
   (-0.5,-0.5) [rounded corners=5pt]-- (-0.5,0.4)
 [rounded corners=15pt]--(-0.7,1.7)
 [rounded corners=5pt]--(1,3)--(1.7,3.3)--(2.3,3.5)--(2.9,3.4)
  [rounded corners=15pt] --(3.4,2.5)--(3.2,1.6)--(3.1,0.2)
  [rounded corners=5pt] --(3.3,-0.6)
   [rounded corners=15pt] --(2.8,-1.3)
   [rounded corners=5pt]--(0.4,-1.6)--(0,-1)
    [rounded corners=15pt] --cycle;

\filldraw[color=black!60,fill=red!5,style=thick]
(0,0)[rounded corners=10pt] -- (0.2,0.9) -- (0.8,2.4)--(1.7,2.6)
 [rounded corners=5pt]--(2.9,2)--(2.6,1.4)
 [rounded corners=10pt]--(3,0.4)--(2.1,0)--(1.1,-1.0)--cycle
(-0.5,-0.5) [rounded corners=5pt]-- (-0.5,0.4)
 [rounded corners=15pt]--(-0.7,1.7)
 [rounded corners=5pt]--(1,3)--(1.7,3.3)--(2.3,3.5)--(2.9,3.4)
  [rounded corners=15pt] --(3.4,2.5)--(3.2,1.6)--(3.1,0.2)
  [rounded corners=5pt] --(3.3,-0.6)
   [rounded corners=15pt] --(2.8,-1.3)
   [rounded corners=5pt]--(0.4,-1.6)--(0,-1)
    [rounded corners=15pt] --cycle;

\filldraw[color=red!60, fill=blue!10, thick](-1,-1) circle (0.7);

 \filldraw[color=black!60,fill=green!10,style=thick]
(0,0)[rounded corners=10pt] -- (0.2,0.9) -- (0.8,2.4)--(1.7,2.6)
 [rounded corners=5pt]--(2.9,2)--(2.6,1.4)
 [rounded corners=10pt]--(3,0.4)--(2.1,0)--(1.1,-1.0)--cycle;

   \node  at (2,-1.1) {$\Omega_{t_1}$};
    \node  at (-1.3,0) {$F$};
     \node  at (-1,-1) {$W$};
     \node  at (1.5,1.4) {$\Omega$};
     \node at (-0.6,0.4) {$\Sigma_{t_1}$};
     \node at (0.05,0.8) {$\Sigma$};
     \node at (-1.8,0.9) {$\pt F$};
     \node at (-1,-2.5) {Figure 1: $F$ is strictly minimizing hull of $\Omega_{t_1}$ and $W$};
     \node at (-3, 2) {$(M^n,g)$};
\end{tikzpicture}
\end{center}
Suppose that $t_1$ is the jump time. Then before $t_1$, each $\Sigma_t$ is null homologous. Using the divergence theorem as in \eqref{s3:lem2-7}, we have that $\int_{\Sigma_t}\nabla f\cdot\nu=0$ if $t\leq t_1$. The similar argument as in Lemma \ref{prop-mono} and Proposition \eqref{prop-1} implies that
\begin{equation*}
  |\Sigma_t|^{-\frac{n-2}{n-1}}\int_{\Sigma_t}fHd\mu_t
\end{equation*}
is monotone non-increasing in time $t$ if $t\leq t_1$. As $|\Sigma_t|$ is increasing and the mass $m>0$, we have that $Q(t)$ is strictly decreasing for $t\leq t_1$.
\begin{lem}
$\pt F$ is $C^{1,\alpha}$ away from a singular set of Hausdorff dimension at most $n-8$.
\end{lem}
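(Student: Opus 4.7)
The plan is to recognize $\partial F$ as a one-sided area-minimizing boundary with obstacle $E:=\Omega_{t_1}\cup W$, and then invoke the standard De Giorgi--Federer--Almgren regularity theory for minimizing boundaries in a smooth Riemannian ambient, along the lines of \cite[\S 1]{HI01}. Since $W$ is filled in when forming $\tilde{M}$, the horizon $\partial W$ lies in the interior of $E$ and $\partial E$ reduces to $\Sigma_{t_1}$, which by Theorem \ref{s2:thm2-1} is $C^{1,\alpha}$ away from the singular set $Z$ of Hausdorff dimension at most $n-8$. I would partition $\partial F$ into the contact set $\mathcal{C}:=\partial F\cap\partial E$ and the free part $\mathcal{N}:=\partial F\setminus\partial E$, which lies strictly in $\tilde{M}\setminus E$.

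The main technical step is to show that $\partial F$ is locally two-sided area-minimizing on $\mathcal{N}$. Outward minimality is immediate from the definition of a minimizing hull. For inward minimality, I would argue by contradiction: if some $G\subsetneq F$ with $F\setminus G\subset\subset\mathcal{N}$ satisfied $|\partial^{*}G|<|\partial^{*}F|$, then $G\supset E$ (because $F\setminus G$ is disjoint from $E$), and the strictly minimizing hull $G^{*}$ of $G$ would satisfy $G^{*}\supset G\supset E$, and therefore $F\subset G^{*}$ by the definition of $F$ as the intersection of all such hulls. Combining $|\partial^{*}G^{*}|\leq|\partial^{*}G|$ (a standard feature of the strictly minimizing hull construction) with the minimizing hull property $|\partial^{*}F|\leq|\partial^{*}G^{*}|$ applied to $G^{*}\supset F$ would yield $|\partial^{*}F|<|\partial^{*}F|$, a contradiction. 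For a general competitor $G$ with $G\triangle F\subset\subset\mathcal{N}$, I would apply these one-sided inequalities to $F\cup G$ and $F\cap G$ (both of which still contain $E$) and combine with the submodularity $|\partial^{*}(F\cup G)|+|\partial^{*}(F\cap G)|\leq|\partial^{*}F|+|\partial^{*}G|$ of the perimeter to obtain two-sided minimality.

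With two-sided local area-minimality on $\mathcal{N}$ in hand, the classical regularity theory for minimizing boundaries in a smooth Riemannian ambient yields that $\mathcal{N}$ is smooth, hence $C^{1,\alpha}$, away from a closed singular set of Hausdorff dimension at most $n-8$. On $\mathcal{C}$, $\partial F$ coincides with $\Sigma_{t_1}$ and hence inherits its $C^{1,\alpha}$ regularity away from $Z$. The main remaining obstacle is to match the $C^{1,\alpha}$ regularity across the free boundary $\partial\mathcal{C}$: this is the standard obstacle-problem situation, where at a regular contact point $p$ a blow-up and comparison argument together with the outward minimality of $F$ forces the tangent plane of $\partial F$ at $p$ to agree with that of $\Sigma_{t_1}$, and Caffarelli-type barrier arguments promote $C^{0}$ contact to $C^{1,\alpha}$ matching. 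The combined singular set of $\partial F$ is then contained in the union of $Z$ and the singular set produced on $\mathcal{N}$, both of Hausdorff dimension at most $n-8$, as claimed.
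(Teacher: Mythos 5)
Your decomposition of $\partial F$ into the contact set $\mathcal{C}=\partial F\cap\partial E$ and the free part $\mathcal{N}$ is in the right spirit, and the argument you sketch for two-sided area-minimality of $\partial F$ on $\mathcal{N}$ (outward minimality from the hull property, inward minimality via the strictly minimizing hull $G^*$ of a purported competitor, then submodularity of perimeter) is correct. However, this part is already a known lemma of Huisken--Ilmanen: if $E'$ is the strictly minimizing hull, then $\partial E'\setminus\overline{E}$ is area-minimizing, so you are re-proving something the paper treats as given.

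The genuine gap is at the step you label ``the main remaining obstacle.'' The assertion that a ``blow-up and comparison argument'' plus ``Caffarelli-type barrier arguments'' promote $C^0$ contact to $C^{1,\alpha}$ matching with the same $n-8$ bound on the singular set is precisely the content of the obstacle-regularity theorems that have to be invoked, and they have hypotheses that you have not verified. In particular, for $n\geq 8$ the obstacle boundary $\Sigma_{t_1}$ itself carries a singular set $Z$, and you give no reason why the singular set of $\partial F$ near $\partial\mathcal{C}$, or near the points of $\mathcal{C}$ lying in $Z$, should remain of dimension $\leq n-8$. The paper's proof supplies exactly this missing input: it observes that $\Omega_{t_1}=\{u<t_1\}$ is a minimizer of the Huisken--Ilmanen functional $J_u$ with $|\nabla u|$ locally bounded (by their Theorem~3.1), so that the obstacle $\Omega_{t_1}\cup W$ has, in the appropriate weak sense, locally bounded mean curvature; this is the hypothesis of the Barozzi--Massari theorem \cite{Ba-M82} (see also Tamanini \cite{Ta82}), which then yields directly that $\partial F$ is $C^{1,\alpha}$ off a set of Hausdorff dimension at most $n-8$. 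For $n<8$ the paper takes an even shorter route and simply cites Regularity Theorem~1.3(ii) of \cite{HI01}, since $\Sigma_{t_1}$ is then globally $C^{1,\alpha}$ and $\partial M$ is smooth. So: right framework, but you should replace the hand-waved free-boundary/matching step with the verification of the Barozzi--Massari hypothesis via the variational characterization of $\Omega_{t_1}$ and the local gradient bound on $u$.
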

\proof
If $n<8$, then $\Sigma_{t_1}=\pt \Omega_{t_1}$ is $C^{1,\alpha}$ and $\pt M$ is smooth, which combined with the Regularity Theorem 1.3 (ii) of \cite{HI01} implies that $\pt F$ is $C^{1,\alpha}$. If $n\geq 8$, then $\Sigma_{t_1}$ has singular set of Hausdorff dimension at most $n-8$. By the variational formulation of the weak solution of IMCF described in \cite[\S 1]{HI01}, $\Omega_{t_1}=\{u<t_1\}$ minimizes $J_u$ in $\Omega^c$ among sets of locally finite perimeter $F$ with $F\Delta E\subset\subset \Omega^c$, where
\begin{equation*}
  J_u(F)=J_u^K(F)=|\pt^*F\cap K|-\int_{F\cap K}|\nabla u|.
\end{equation*}
and  $K$ is any compact set containing $F\Delta E$. Since $|\nabla u|$ is bounded above locally uniformly by Theorem 3.1 of \cite{HI01}, the obstacle $\Omega_{t_1}\cup W$ satisfies the assumption of the main theorem in \cite{Ba-M82} (see also Proposition 2 of \cite{Ta82} ). Therefore, as the strictly minimizing hull of $\Omega_{t_1}\cup W$, $F$ has boundary $\pt F$ which is $C^{1,\alpha}$ away from a singular set of Hausdorff dimension at most $n-8$.
\endproof

Since $\Sigma_{t_1}$ is outward minimizing and $F\supset \Omega_{t_1}$, we have
\begin{equation}\label{jump1}
  |\pt F|\geq ~|\Sigma_{t_1}|.
\end{equation}
Now we assume that $n<8$.  As $\Sigma_{t_1}$ is $C^{1,\alpha}$ with nonnegative bounded weak mean curvature, the standard Calderon-Zygmund estimate implies that $\Sigma_{t_1}$ is of class $W^{2,p}$ for all $1\leq p<\infty$. We can choose a sequences of sets $E_i$ containing $\Omega_{{t_1}}$ by mollification such that $\pt E_i$ is smooth and converges to $\Sigma_{t_1}=\pt\Omega_{{t}_1}$ in $C^{1,\alpha}\cap W^{2,p}$. The Regularity Theorem 1.3 and (1.15) of \cite{HI01} imply that $\pt (E_i\cup W)'$ is $C^{1,1}$ and $H=0$ on $\pt (E_i\cup W)'\setminus \pt (E_i\cup W)$. Thus
\begin{equation*}
  \int_{\pt (E_i\cup W)'}fH=~\int_{\pt (E_i\cup W)'\cap \pt E_i}fH=~\int_{\pt E_i}fH-\int_{\pt E_i\setminus\pt (E_i\cup W)'}fH
\end{equation*}
It can be seen that $(E_i\cup W)'\to F$ and $\pt (E_i\cup W)'\to \pt F$ in $C^{1,\alpha}$. Passing to limits and using the nonnegativity of the weak mean curvature on $\Sigma_{t_1}$, we obtain
\begin{equation}\label{jump2}
  \int_{\pt F}fH\leq ~\int_{\Sigma_{t_1}}fH.
\end{equation}
The estimates \eqref{jump1}--\eqref{jump2} imply that the quantity $Q(t)$ defined in \eqref{s1:Qt-def} does not increase during the jump. Similar as in \cite[\S 6]{HI01}, we can show that $F$ is a suitable initial condition to restart the flow. We approximate $\Sigma_{t_1}$ in $C^1 $ by a sequence of smooth hypersurfaces $\pt U_i$ with uniformly bounded mean curvature and $U_i\supset\Omega_{t_1}$, using Lemma 6.2 of \cite{HI01}. Then by Regularity Theorem 1.3 and (1.15) of \cite{HI01}, $\pt (U_i\cup W)'$ is $C^{1,1}$ and has uniformly bounded mean curvature as well. It can be checked that $\pt (U_i\cup W)'$ converges to $\pt F$ in $C^1$. Then slightly mollifying $\pt (U_i\cup W)'$ shows that $\pt F$ is approximated in $C^1$ by smooth hypersurfaces $\pt F^i$ with uniformly bounded mean curvature. The Existence Theorem 3.1 of \cite{HI01} gives a solution $u^i$ of weak IMCF $F_t^i, t>0$ with initial condition $F^i$, and uniform bounds on the gradient $|\nabla u^i|$. Passing to the limits and applying the Compactness Theorem 2.1 of \cite{HI01}, we obtain the solution $u$ of the  weak IMCF with initial condition $F$.  The proof of Proposition \ref{prop-1} and the argument in \cite[p.407]{HI01} imply that $Q(t)$ is monotone non-increasing in $t$ for $t\geq 0$ along the weak IMCF $F_t$ with the initial condition $F$. Thus we conclude that $Q(t)$ is monotone non-increasing for all time $t$.

\begin{prop}\label{prop-1}
Let $n<8$ and $\Omega$ be a bounded domain with smooth boundary $\pt\Omega=\Sigma$ in the Schwarzschild space $(M^n,g)$. Assume that $\Sigma$ is outward minimizing. Then $Q(t)$ is monotone non-increasing for all time along the weak IMCF.
\end{prop}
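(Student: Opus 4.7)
The plan is to piece together three regimes of the flow: (i) a pre-jump regime on $[0,t_1]$ during which $\Sigma_t$ is null-homologous, (ii) the single jump at time $t_1$ from $\Omega_{t_1}$ to the strictly minimizing hull $F$ of $\Omega_{t_1}\cup W$, and (iii) a post-jump regime starting from $\partial F$, where the flow becomes homologous to the horizon. The goal is to verify non-increase of $Q$ in each regime and concatenate them.

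For regime (i), I would rerun the approximation argument of Lemma \ref{prop-mono}, with one essential change: since $\Omega_t$ is disjoint from the horizon for $t<t_1$, the divergence theorem applied to $\Delta f=0$ gives $\int_{\Sigma_t}\nabla f\cdot\nu\,d\mu_t=0$ rather than $m(n-2)\omega_{n-1}$. Gronwall then yields that $|\Sigma_t|^{-(n-2)/(n-1)}\int_{\Sigma_t}fH\,d\mu_t$ is non-increasing; since $|\Sigma_t|=e^t|\Sigma|$ grows strictly and $m>0$, the additive constant $2(n-1)m\omega_{n-1}$ in the definition of $Q(t)$ forces $Q$ to be \emph{strictly} decreasing on $[0,t_1]$, as already observed in the paragraph preceding the statement.

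For the jump in regime (ii), two estimates are needed. The area bound $|\partial F|\geq|\Sigma_{t_1}|$ follows from outward minimality of $\Sigma_{t_1}$ (preserved by the weak flow via part (e) of Theorem \ref{s2:thm2-1}) together with $F\supset\Omega_{t_1}$. The weighted mean curvature bound $\int_{\partial F}fH\,d\mu\leq\int_{\Sigma_{t_1}}fH\,d\mu_{t_1}$ is the delicate step: I would mollify $\Sigma_{t_1}$ to obtain smooth $\pt E_i\supset\Omega_{t_1}$ converging in $C^{1,\alpha}\cap W^{2,p}$ (Calder\'on--Zygmund applies because $\Sigma_{t_1}$ is $C^{1,\alpha}$ with bounded weak mean curvature when $n<8$), then invoke Regularity Theorem 1.3 and identity (1.15) of \cite{HI01} to conclude that $\pt(E_i\cup W)'$ is $C^{1,1}$ with vanishing weak mean curvature on $\pt(E_i\cup W)'\setminus\pt E_i$. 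Writing $\int_{\pt(E_i\cup W)'}fH=\int_{\pt E_i}fH-\int_{\pt E_i\setminus\pt(E_i\cup W)'}fH$ and passing to the $C^{1,\alpha}$ limit, where $(E_i\cup W)'\to F$, together with nonnegativity of the weak mean curvature of $\Sigma_{t_1}$, yields the bound. For regime (iii), I would use the two-stage smoothing of \cite[Lemma 6.2]{HI01} to approximate $\pt F$ in $C^1$ by smooth hypersurfaces $\pt F^i$ with uniformly bounded mean curvature, invoke Existence Theorem 3.1 and Compactness Theorem 2.1 of \cite{HI01} to produce a weak IMCF from $\pt F$, and then apply the monotonicity already established in Proposition \ref{prop-1} (horizon-homologous case) to the restarted flow.

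The principal obstacle is the jump estimate $\int_{\partial F}fH\leq\int_{\Sigma_{t_1}}fH$, and it is precisely here that the restriction $n<8$ enters: the Calder\'on--Zygmund $W^{2,p}$ regularity of $\Sigma_{t_1}$ and the $C^{1,1}$ regularity of $\partial F$ coming from \cite{HI01,Hei01} both require the absence of a codimension-$8$ singular set, without which the double mollification and the $L^p$-convergence of mean curvatures weighted by the smooth function $f$ would break down. A secondary, more technical point is to carry the weight $f$ consistently through both approximation steps; since $f$ is smooth and uniformly positive on any compact region of $M$, this should be straightforward once the unweighted scheme is in place.
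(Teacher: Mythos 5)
Your proposal reconstructs the paper's proof essentially verbatim: the same three-regime decomposition into a pre-jump null-homologous stage (where $\int_{\Sigma_t}\nabla f\cdot\nu\,d\mu_t=0$ and Gronwall gives strict decrease of $Q$), the jump to the strictly minimizing hull $F$ with the two estimates $|\partial F|\ge|\Sigma_{t_1}|$ and $\int_{\partial F}fH\le\int_{\Sigma_{t_1}}fH$ obtained via Calder\'on--Zygmund regularity and a $W^{2,p}\cap C^{1,\alpha}$ mollification $\partial E_i\to\Sigma_{t_1}$, and the post-jump restart via the two-stage smoothing of \cite[Lemma 6.2]{HI01} combined with the horizon-homologous monotonicity. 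The only small imprecision is attributing the outward-minimizing property of $\Sigma_{t_1}$ to part (e) of Theorem \ref{s2:thm2-1} rather than part (b); this does not affect the argument.
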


\section{Proof of the main theorem}
In \S \ref{sec:thm1}, we proved that the quantity $Q(t)$ is monotone non-increasing along the weak IMCF. In this section, we first estimate the limit of $Q(t)$ as $t\ra\infty$.
\begin{prop}\label{prop-2}
We have
\begin{align}\label{Qt-lim}
    \lim_{t\ra\infty}Q(t)~= &~(n-1)\omega_{n-1}^{\frac 1{n-1}}.
\end{align}
\end{prop}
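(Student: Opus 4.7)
The plan has two parts: an exact computation on coordinate spheres showing $Q(S_s) \equiv (n-1)\omega_{n-1}^{1/(n-1)}$, and an asymptotic-convergence argument that transfers this identity to the weak flow as $t \to \infty$.

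First I would compute $Q$ on the coordinate sphere $S_s = \{s\} \times \mathbb{S}^{n-1}$. Writing \eqref{schwarz} in arclength form $g = dr^2 + s(r)^2 g_{\mathbb{S}^{n-1}}$ with $ds/dr = f$, the warped-product formula gives $H(S_s) = (n-1)f(s)/s$, hence
\begin{equation*}
\int_{S_s} fH\, d\mu = (n-1)\omega_{n-1}\bigl(s^{n-2} - 2m\bigr), \qquad |S_s| = \omega_{n-1} s^{n-1}.
\end{equation*}
Substituting into \eqref{s1:Qt-def}, the $-2(n-1)\omega_{n-1}m$ contribution and the additive $+2(n-1)m\omega_{n-1}$ cancel exactly, leaving $Q(S_s) = (n-1)\omega_{n-1}^{1/(n-1)}$ independent of $s$. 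This is consistent with the fact that coordinate spheres are themselves a smooth IMCF solution along which the monotonicity argument of \S \ref{sec:smooth} gives equality.

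Second, I would invoke the asymptotic analysis of Huisken--Ilmanen \cite[\S 7]{HI01}. Define $r(t)$ by $\omega_{n-1}r(t)^{n-1} = |\Sigma_t|$; since $|\Sigma_t| \geq e^t|\Sigma'|$, $r(t) \to \infty$. By \cite[\S 7]{HI01} the rescaled surfaces $r(t)^{-1}\Sigma_t$ converge in $C^{1,\alpha}$ to the unit sphere $\mathbb{S}^{n-1}\subset\mathbb{R}^n$; simultaneously, the rescaled Schwarzschild metric converges smoothly on compact sets to the Euclidean metric, and the potential $f$ converges uniformly to $1$ on $\Sigma_t$.

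The principal technical obstacle is that $C^{1,\alpha}$-convergence of hypersurfaces does not give pointwise or $L^1$ control of the weak mean curvature. I would overcome this by using the convergence of the weak mean curvature as a vector-valued Radon measure, the mode of convergence recorded in \eqref{s3:Mean-WC} together with the surrounding discussion in \S \ref{sec:2}: applied on the rescaled flow against the uniformly converging test field $f\nu$, it yields
\begin{equation*}
\int_{\Sigma_t} fH\, d\mu_t \;=\; (n-1)\omega_{n-1}\, r(t)^{n-2} \;+\; o\bigl(r(t)^{n-2}\bigr).
\end{equation*}
Dividing through by $|\Sigma_t|^{(n-2)/(n-1)} = \omega_{n-1}^{(n-2)/(n-1)} r(t)^{n-2}$, and noting that $2(n-1)m\omega_{n-1}/|\Sigma_t|^{(n-2)/(n-1)} \to 0$ as $r(t)\to\infty$, produces the claimed limit \eqref{Qt-lim}.
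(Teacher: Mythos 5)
Your argument is correct and follows essentially the same route as the paper: defining $r(t)$ by $|\Sigma_t|=\omega_{n-1}r(t)^{n-1}$, invoking the blow-down Lemma 7.1 of \cite{HI01} for $C^{1,\alpha}$-convergence of $r(t)^{-1}\Sigma_t$ to $\partial D_1$, using the gradient bound $|\nabla u|\leq C/|x|$ to control the rescaled mean curvature, passing to the limit via the first-variation/Radon-measure weak convergence of $H\nu$, and using $f\to 1$ and the negligibility of the additive $2(n-1)m\omega_{n-1}$ term. The preliminary exact computation $Q(S_s)\equiv(n-1)\omega_{n-1}^{1/(n-1)}$ on coordinate spheres is a correct and welcome consistency check, though not logically required; the only point worth adding is that the curvature bound holds only for a.e.\ large $t$, so one obtains the limit first along a subsequence $t_i\to\infty$ and then upgrades it to the full limit using the already-established monotonicity of $Q$.
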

\proof
Denote by $U=\mathbb{R}^n\setminus D_{r_0}$ the asymptotic flat end of $M$. The Schwarzschild metric on $U$  is
\begin{equation*}
  g=~\left(1+\frac m2r^{2-n}\right)^{\frac 4{n-2}}\left(dr^2+r^2g_{\mathbb{S}^{n-1}}\right),
\end{equation*}
For any $\lambda>0$, define the blow down object by
\begin{equation*}
  \Sigma_t^{\lambda}:=\lambda \Sigma_t=\{\lambda x:x\in \Sigma_t\},\qquad g^{\lambda}(x):=\lambda^2g(x/{\lambda}).
\end{equation*}
Let $r(t)$ be such that $|\Sigma_t|=\omega_{n-1}r(t)^{n-1}$. Then $|\Sigma_t^{1/{r(t)}}|_{g^{1/{r(t)}}}=\omega_{n-1}$ and the blow down Lemma 7.1 of \cite{HI01} implies that \begin{equation}\label{s4:blowd-1}
  \Sigma_t^{1/{r(t)}}~\to~ \partial D_1
\end{equation}
in $C^{1,\alpha}$ as $t\to \infty$. As in the proof of Lemma 7.1 of \cite{HI01}, there exist constants $C,R_0>0$ depending only on the dimension $n$ such that
\begin{equation*}
  |\nabla u(x)|~\leq ~\frac C{|x|},\quad\mathrm{ for~ all }~~|x|\geq R_0.
\end{equation*}
By the property (d) of the weak solution of IMCF, we have
\begin{equation}
  |H|=|\nabla u|~\leq ~\frac C{|x|}\leq~\frac C{r(t)}, \quad \mathrm{a.e., on}~~ \Sigma_t
\end{equation}
for a.e. sufficiently large $t$, where we have used \eqref{s4:blowd-1} to relate $|x|$ to $r(t)$. The mean curvature of $\Sigma_t^{1/{r(t)}}$ with respect to the metric $g^{1/{r(t)}}$ satisfies
\begin{equation}\label{s4:H2}
  H^{1/{r(t)}}(x)=~r(t)H(r(t)x)\leq ~C,\quad \mathrm{a.e.}~~ x\in \Sigma_t^{1/{r(t)}}.
\end{equation}
for a.e. sufficiently large $t$. Write $\Sigma_t^{1/{r(t)}}$ as graphs of $C^{1,\alpha}$ functions over $\partial D_1$. By \eqref{s4:blowd-1} and \eqref{s4:H2}, for any sequence of time $t_i\to\infty$ such that \eqref{s4:H2} holds for time $t_i$, we have the weak convergence of the mean curvature
\begin{equation}\label{s4:H3}
  \int_{\Sigma_{t_i}^{1/{r(t_i)}}}H_{\Sigma_{t_i}^{1/{r(t_i)}}}\nu_{\Sigma_{t_i}^{1/{r(t_i)}}}\cdot X~\to~\int_{\pt D_1}H_{\partial D_1}\nu_{\pt D_1}\cdot X,\qquad X\in C_c^0(TM),
\end{equation}
Recall that
\begin{equation}\label{s4:f}
  f(x)=\sqrt{1-2ms(x)^{2-n}}=~1-m|x|^{2-n}+O(|x|^{4-2n}),
\end{equation}
where
\begin{equation*}
  s(x)=|x|\left(1+\frac m2|x|^{2-n}\right)^{\frac 2{n-2}},\qquad \forall~x\in U.
\end{equation*}
Then by \eqref{s4:blowd-1}, and \eqref{s4:H2} -- \eqref{s4:f}, we have that
\begin{align*}
  \lim_{t_i\to\infty}|\Sigma_{t_i}|^{-\frac{n-2}{n-1}}\int_{\Sigma_{t_i}}fHd\mu_{t_i} =&\omega_{n-1}^{-\frac{n-2}{n-1}}\lim_{t_i\to\infty}r(t_i)^{-(n-2)}\int_{\Sigma_{t_i}}fHd\mu_{t_i}  \\
  = & \omega_{n-1}^{-\frac{n-2}{n-1}}\lim_{t_i\to\infty}\int_{\Sigma_{t_i}^{1/{r(t_i)}}}f(r(t_i)x)H^{1/{r(t_i)}}(x)d\mu_{\Sigma_{t_i}^{1/{r(t_i)}}}  \\
  =&(n-1)\omega_{n-1}^{\frac 1{n-1}}.
\end{align*}
Observe that $2(n-1)m\omega_{n-1}$ is a fixed constant and $|\Sigma_{t_i}|$ goes to infinity as $t_i\to\infty$. Therefore
\begin{equation*}
   \lim_{t_i\to\infty}Q(t_i)~=~(n-1)\omega_{n-1}^{\frac 1{n-1}},
\end{equation*}
which combined with the monotonicity of $Q(t)$ yields the estimate \eqref{Qt-lim}.
\endproof

We now complete the proof of our main theorem.
\begin{proof}[Proof of Theorem \ref{main-thm}]
Proposition \ref{prop-2} together with the monotonicity of $Q(t)$ yields the inequality \eqref{main-inequ} in Theorem \ref{main-thm} immediately. To complete the proof of Theorem \ref{main-thm}, it remains to prove the rigidity of the inequality \eqref{main-inequ}.

If equality holds in \eqref{main-inequ} for $\Sigma$, then $Q(0)=Q(t)$ for all $t>0$. Then the initial hypersurfae $\Sigma$ must be homologous to the horizon, because if not, $Q(t)$ should be strictly decreasing during the jump as described in \S \ref{sec:4-2}. From the proof of Lemma \ref{prop-mono}, the fact that $Q(0)=Q(t)$ for all $t>0$ also implies $H^2=(n-1)|A|^2$ a.e. on $\Sigma_t$ for almost all time $t\geq 0$. Since $\Sigma$ is smooth and outward minimizing,  Theorem \ref{s2:thm2-1} implies that $\Sigma_t\to \Sigma$ locally in $C^{1,\beta}$ as $t\to 0+$. We can choose a sequence of time $t_i\to 0+$ with $\int_{\Sigma_{t_i}}|\mathring{A}|^2=0$ and $\Sigma_{t_i}$ converges to $\Sigma$ locally in $C^{1,\beta}$ as $t_i\to 0+$. The lower semicontinuity implies
 \begin{equation*}
   \int_{\Sigma}|\mathring{A}|^2\leq ~\liminf_{t_i\to 0+}\int_{\Sigma_{t_i}}|\mathring{A}|^2=~0.
 \end{equation*}
and then $\Sigma$ is totally umbilic in the Schwarzchild space $(M^n,g)=(\mathbb{R}^n\setminus D_{r_0}, g_{ij})$. Denote the Schwarzschild metric $g=e^{2\psi}\delta_{ij}$ on $\mathbb{R}^n\setminus D_{r_0}$, where
\begin{equation*}
  \psi=~\frac 2{n-2}\ln\left(1+\frac m2r^{2-n}\right).
\end{equation*}
Let $\bar{\nu}, \bar{H}$ and $\bar{h}_i^j$ be the unit outward normal, mean curvature and shape operator of $\Sigma$ with respect to $(\mathbb{R}^n\setminus D_{r_0}, \delta_{ij})$. Then they satisfies the following transformation formula under the conformal change of the ambient metric:
\begin{align}
 e^{\psi}H=&~\bar{H}+d\psi\cdot \bar{\nu},\qquad \nu=~e^{-\psi}\bar{\nu},\label{s5:conf}\\
 \mathring{h}_i^j=&~h_i^j-\frac {H}{n-1}\delta_i^j~=~e^{-\psi}\left(\bar{h}_i^j-\frac {\bar{H}}{n-1}\delta_i^j\right)~=~e^{-\psi}\mathring{\bar{h}}_i^j,\label{s5:conf-1}
\end{align}
where $\mathring{h}_i^j$ and $\mathring{\bar{h}}_i^j$ are the trace-less second fundamental forms of $\Sigma$ in $\mathbb{R}^n\setminus D_{r_0}$ with respect to the metrics $\delta_{ij}$ and $g_{ij}$ respectively. Equation \eqref{s5:conf-1} says that the totally umbilicity of a hypersurface is invariant under the conformal change of the ambient metric. Then $\Sigma$ is totally umbilic in $\mathbb{R}^n\setminus D_{r_0}$ with respect to the Euclidean metric $\delta_{ij}$ and therefore is a sphere in $(\mathbb{R}^n\setminus D_{r_0}, \delta_{ij})$.

We next show that $\Sigma$ is a sphere centered at the origin in $(\mathbb{R}^n\setminus D_{r_0}, \delta_{ij})$, and is a slice $\{s\}\times \mathbb{S}^{n-1}$ if considered as a hypersurface in the Schwarzschild space.
Suppose that the radius of the sphere $\Sigma$ is $r$. Then the mean curvature of $\Sigma$ in $(\mathbb{R}^n\setminus D_{r_0}, \delta_{ij})$ is $\bar{H}=(n-1)/r$ and
\begin{equation}\label{s5:conf2}
   d\psi\cdot\bar{\nu}=~-\frac {mr^{1-n}}{1+\frac m2r^{2-n}}\pt_r\cdot \bar{\nu}\geq ~-\frac {mr^{1-n}}{1+\frac m2r^{2-n}}.
\end{equation}
This implies that the mean curvature of $\Sigma$ in the $(\mathbb{R}^n\setminus D_{r_0}, g_{ij})$ satisfies $H=e^{-\psi}(\bar{H}+d\psi\cdot \bar{\nu})>0$.  Since $\Sigma$ is strictly mean convex, starting from $\Sigma$ there exists a unique smooth solution to the IMCF \eqref{s2:IMCF1} in Schwarzschild space, which coincides with the weak solution for a short time $t\in [0,\delta)$ by the Smooth Start Lemma 2.4 of \cite{HI01}. Arguing similarly as before, each $\Sigma_t, t\in[0,\delta),$ is a sphere in $(\mathbb{R}^n\setminus D_{r_0}, \delta_{ij})$. By the conformal transformation formulas \eqref{s5:conf}, $\Sigma_t$ solves the following corresponding flow in Euclidean space $(\mathbb{R}^n\setminus D_{r_0}, \delta_{ij})$
\begin{equation}\label{Flow-E}
  \frac{\pt}{\pt t}X(x,t)=\frac 1{\bar{H}+d\psi\cdot \bar{\nu}}\bar{\nu}(x,t), \quad t\in [0,\delta).
\end{equation}
Under the flow \eqref{Flow-E}, the shape operator $\bar{h}_i^j$ of $\Sigma_t$ in $(\mathbb{R}^n\setminus D_{r_0}, \delta_{ij})$ evolves by
\begin{align*}
  \frac{\pt }{\pt t}\bar{h}_i^j =& ~-\nabla^j\nabla_i \left(\frac 1{\bar{H}+d\psi\cdot \bar{\nu}}\right)-\frac {\bar{h}_i^k\bar{h}_k^j}{\bar{H}+d\psi\cdot \bar{\nu}}\\
  = & \frac{\nabla^j\nabla_i (d\psi\cdot\bar{\nu})}{\left(\bar{H}+d\psi\cdot \bar{\nu}\right)^2}-\frac{2\nabla^j(d\psi\cdot\bar{\nu})\nabla_i (d\psi\cdot\bar{\nu})}{\left(\bar{H}+d\psi\cdot \bar{\nu}\right)^3}-\frac {\bar{H}^2\delta_i^j}{(n-1)^2\left(\bar{H}+d\psi\cdot \bar{\nu}\right)}
\end{align*}
As in \eqref{s5:conf2},
\begin{equation*}
  d\psi\cdot\bar{\nu}=~-\frac {mr^{1-n}}{1+\frac m2r^{2-n}}\pt_r\cdot \bar{\nu}=v(|X|^2)X\cdot \bar{\nu},
\end{equation*}
where $|X|^2=r^2$, $X\cdot\bar{\nu}=r\pt_r\cdot\bar{\nu}$ and $v(\cdot):\mathbb{R}^+\to \mathbb{R}$ is a function given by
\begin{equation*}
  v(x):=~-\frac{mx^{-n/2}}{1+\frac m2x^{1-\frac n2}}.
\end{equation*}
Then
\begin{equation*}
  \nabla_i(d\psi\cdot\bar{\nu})=~\left(2v' \langle X,\bar{\nu}\rangle+\frac{\bar{H}}{n-1}v\right)\langle X,e_i\rangle
\end{equation*}
and
\begin{align*}
  \nabla^j\nabla_i(d\psi\cdot\bar{\nu})=&~4\left(v''\langle X,\bar{\nu}\rangle+\frac{\bar{H}}{n-1}v'\right)\langle X,e_i\rangle \langle X,e_j\rangle\\
  &\quad +\left(2v'\langle X,\bar{\nu}\rangle+\frac{\bar{H}}{n-1}v\right)\left(1-\frac{\bar{H}}{n-1}\langle X,\bar{\nu}\rangle\right)\delta_i^j.
\end{align*}
Since $\Sigma_t$ is totally umbilic for  $t\in [0,\delta)$, the trace-less second fundamental form $\mathring{\bar{h}}_i^j$ is zero for all time $t\in [0,\delta)$. Then
\begin{align*}
  0=&~\frac{\pt }{\pt t}\mathring{\bar{h}}_i^j =~\frac{\pt }{\pt t}\left(\bar{h}_i^j-\frac{\bar{H}}{n-1}\delta_i^j\right)\displaybreak[0]\\
  = & \left(\frac{4\left(v''\langle X,\bar{\nu}\rangle+\frac{\bar{H}}{n-1}v'\right)}{\left(\bar{H}+d\psi\cdot \bar{\nu}\right)^2}-\frac{2\left(2v'\langle X,\bar{\nu}\rangle+\frac{\bar{H}}{n-1}v\right)^2}{\left(\bar{H}+d\psi\cdot \bar{\nu}\right)^3}\right)\displaybreak[0]\\
  &\qquad \times \left(\langle X,e_i\rangle \langle X,e_j\rangle-\frac{|X^{\top}|^2}{n-1}\delta_i^j\right),
\end{align*}
where $X^{\top}$ denotes the tangential part of the position vector. It follows that $\langle X,e_i\rangle^2={|X^{\top}|^2}/{(n-1)}$ on $\Sigma_t$ and is independent of the direction $e_i$. This can occur only if position vector $X$ is parallel to the normal vector at $X$ and each $\Sigma_t, t\in [0,\delta),$ is a sphere centered at the origin. Therefore, each $\Sigma_t, t\in [0,\delta),$ is a slice $\{s\}\times \mathbb{S}^{n-1}$ in the Schwarzschild space. This completes the proof of Theorem \ref{main-thm}.
\end{proof}

\bibliographystyle{amsplain}

\begin{thebibliography}{99}

\bibitem{Bray-M}H. Bray and P. Miao, \emph{On the capacity of surfaces in manifolds with
nonnegative scalar curvature}. Invent. Math. \textbf{172}(2008), 459--475.

\bibitem{BN}H. Bray and A. Neves, Classification of prime 3-manifolds with Yamabe invariant
greater than $\mathbb{RP}^3$, Ann. of Math. \textbf{159}(2004), 407--424.

\bibitem{BHW} S. Brendle, P.-K. Hung and M.-T. Wang, {\it A Minkowski-type inequality for hypersurfaces in the Anti-deSitter-Schwarzschild manifold}, Comm. Pure Appl. Math., \textbf{69}(2016), no. 1, 124--144.

\bibitem{Ba-M82}E. Barozzi, U. Massari, \emph{Regularity of minimal boundaries with obstacles}, Rendiconti del Seminario Matematico della Universita di Padova, \textbf{66}(1982), 129--135.

\bibitem{ES}L.C. Evans and J. Spruck, \emph{Motion of level--sets by mean curvature, I}, J. Differential
Geom. \textbf{33} (1991), 635--681.

\bibitem{Fre-Sch-2014}A. Freire and F. Schwartz, \emph{Mass-capacity inequalities for conformally flat manifolds with boundary}, Comm. PDE, \textbf{39}(2014), no.1, 98--119.

\bibitem{GL} P. Guan and J. Li, {\it The quermassintegral inequalities for k-convex starshaped domains,} Adv. Math. \textbf{221}(2009), 1725--1732.

\bibitem{Hei01} M. Heidusch, \emph{Zur Regularit\"{a}t des inversen mittleren Kr\"{u}mmungsflusses,} PhD
thesis, Eberhard-Karls-Universit\"{a}t T\"{u}bingen (2001).

\bibitem{Hui09}G. Huisken, \emph{Inverse mean curvature flow and isoperimetric inequalities}, video available at
\url{https://video.ias.edu/node/233} (2009).

 \bibitem{HI01} G. Huisken and T. Ilmanen,  {\it The inverse mean curvature flow and the Riemannian Penrose inequality,} J. Differential Geom. \textbf{59} (2001), 353--438.


\bibitem{Il94}   T. Ilmanen, \emph{ Elliptic regularization and partial regularity for motion by mean curvature}, Mem. Amer. Math. Soc. \textbf{108}(520) (1994).

\bibitem{Lee-Neves} Dan A. Lee and A. Neves, \emph{The Penrose inequality for asymptotically locally hyperbolic spaces with nonpositive mass}, Commun. Math. Phys.£¬ \textbf{339}(2015), 327--352 .

\bibitem{Li-Wei-Schwar}H. Li and Y. Wei, \emph{On inverse mean curvature flow in Schwarzschild space and Kottler space}, 	Calc. Var.,  (2017) \textbf{56}: 62.

\bibitem{Moore}K. Moore, \emph{On the evolution of hypersurfaces by their inverse null mean curvature}, J. Differential Geom., \textbf{98}(2014), no.3, 425--466.

\bibitem{Mar}T. Marquardt, \emph{Weak solutions of inverse mean curvature flow
for hypersurfaces with boundary},  J. Reine Angew. Math. \textbf{728 }(2017), 237--261.


\bibitem{Sch08}F. Schulze, \emph{Nonlinear evolution by mean curvature and isoperimetric inequalities}, J. Differential Geom., \textbf{79}(2008), no.2, 197--241.

\bibitem{sch2017}J. Scheuer, \emph{The inverse mean curvature flow in warped cylinders of non-positive radial curvature}, Adv. Math. \textbf{306}(2017), 1130--1163.

\bibitem{LSim} L.M. Simon, \emph{Lectures on Geometric Measure Theory,} Proc. of the Centre for
Math. Analysis, 3, Austr. Nat. Univ., 1983.


\bibitem{Ta82} I. Tamanini, \emph{Boundaries of Caccioppoli sets with H\"{o}lder-continuous normal
vector}, J. Reine Angew. Math., \textbf{334} (1982), 27--39.
\end{thebibliography}

\end{document}